\newtheorem{theorem}{Theorem}
\newtheorem{lemma}[theorem]{Lemma}
\newtheorem{proposition}[theorem]{Proposition}
\newtheorem{corollary}[theorem]{Corollary}
\newtheorem{definition}[theorem]{Definition}
\newtheorem{remark}[theorem]{Remark}
\numberwithin{theorem}{section}
\renewcommand{\phi}{\varphi}
\renewcommand{\(}{\bigl(}
\newcommand{\const}{\operatorname{const}}
\newcommand{\M}{\mathcal M}
\renewcommand{\epsilon}{\varepsilon}
\begin{document}

\title[Magnetic billiards on Sphere and Hyperbolic plane]
      {Algebraic non-integrability of magnetic billiards on the Sphere and Hyperbolic plane}

\date{27 January 2018}
\author{Misha Bialy and Andrey E. Mironov}
\address{M. Bialy, School of Mathematical Sciences,
	Raymond and Beverly Sackler Faculty of Exact Sciences, Tel Aviv
	University, Israel.} \email{bialy@post.tau.ac.il}
\address{A.E. Mironov, Sobolev Institute of Mathematics,
4 Acad. Koptyug avenue, 630090, Novosibirsk, Russia and
Novosibirsk State University,
 Pirogova st 1, 630090, Novosibirsk, Russia}
\email{mironov@math.nsc.ru}
\thanks{M.B. was supported in part by ISF grant 162/15 and A.E.M. was
	supported by RSF (grant 14-11-00441). It is our pleasure to thank
	these funds for the support}

\subjclass[2000]{ } \keywords{Magnetic Billiards, Constant curvature surface,
Polynomial Integrals}
\maketitle
\begin{center}
	\it To the 80th birthday of Sergey Petrovich Novikov with great respect
\end{center}

\begin{abstract}We consider billiard ball motion in
a convex domain on a constant curvature surface influenced by the
constant magnetic field. We examine the existence of integral of motion which is polynomial in velocities. We prove that if such an integral exists then the boundary curve of the domain determines an algebraic curve in $\mathbf{C}^3$ which must be nonsingular.
Using this fact we deduce that for any domain different from round disc for all but finitely many values of the magnitude of the magnetic field billiard motion does not have Polynomial in velocities integral of motion.
\end{abstract}



\section{\bf Introduction }In this paper we consider
a magnetic billiard inside a convex domain $\Omega\subset\Sigma$ of the surface $\Sigma$ of constant curvature $\pm 1$.
 The domain is assumed to be bounded by a simple  smooth
closed curve $\gamma$. We consider the influence of a magnetic field of
constant magnitude $\beta>0$ on the billiard motion, so that the
particle moves inside $\Omega$ with unit speed along a Larmor circle
of constant geodesic curvature $\beta$ and geodesic radius $r$ where $\beta$ and $r$ are related as follows.
In the spherical case $\beta=\cot r$ , while in the case of Hyperbolic plane the condition that the trajectories of the magnetic flow are circles means precisely
that $\beta>1$ and $\beta=\coth r$. It is important to mention that Larmor circles come with the orientation so that the disc they are bounding lies to the left.

Upon hitting the boundary of $\Omega$, the billiard particle is reflected
according to the law of geometric optics. We call such a model a
magnetic Birkhoff billiard.

Throughout the paper we shall assume that the boundary $\gamma$ of
$\Omega$ satisfies
$$
\beta< \min_{\gamma} k,
$$
where $k$ is the curvature. Under this condition the  billiard ball dynamics is correctly defined for all times. Notice that in the Hyperbolic case the condition in particular means that $\gamma$ is convex with respect to horocycles.

\begin{remark}\label{key}
	It is plausible that the results below can be generalized to other ranges of the magnitude $\beta$,
	but we couldn't verify this by our methods. It is especially interesting to treat the case of billiards on the Hyperbolic plane with $0<\beta\leq 1$.
\end{remark}

Billiards is a very rich and interesting subject (see the books \cite{T}, \cite{kt}). Magnetic
Birkhoff billiards were studied in many papers; see, e.g.,
\cite{Berg}, \cite{B}, \cite{BR}, \cite{GB},
 \cite{T2}. The question of existence of Polynomial integrals is very natural and surprisingly deep (see for example the survey \cite{ko}). In our recent paper \cite{BM3} we studied  the question of polynomial integrability of magnetic billiard
in the plane. We used there the ideas from our recent papers on ordinary
Birkhoff billiards \cite{BM1}, \cite{BM2} extending previous
results of \cite{bolotin} and \cite{Tab}.

In the present paper we continue even further and examine algebraic integrability of magnetic billiards on the surfaces of constant curvature. As one can guess the result in this case interpolates planar magnetic case and ordinary billiard on the constant curvature case. Interestingly, our approach combines differential geometry on constant curvature surfaces with the algebraic geometry of curves. Algebraic integrability of ordinary Birkhoff billiards on constant curvature surfaces were studied in \cite{bolotin2} and recently in \cite{BM2} and \cite{g2}. It is very plausible that using the ideas of \cite{g1}, \cite{g2} one can complete the algebraic version of magnetic Birkhoff conjecture for the plane and constant curvature surfaces.

In this paper we are concerned with the existence of first integrals polynomial in
the velocities for magnetic billiards. The polynomial integrals are defined as follows:
\begin{definition}\label{def} Let $\Phi:T_1\Omega\rightarrow\mathbf{R}$
	be a function on the unit tangent bundle which is a polynomial in the components of the unit tangent vector $v$ with respect to a coordinate system $(\phi,\psi)$ on $\Sigma$
	$$\Phi(x,v)=\sum_{k+l=0}^N
	a_{kl}(\phi,\psi)v_{\phi}^kv_{\psi}^l$$
	with coefficients
	continuous up to the boundary, $a_{kl}\in C(\overline\Omega).$ We
	call $\Phi$ a polynomial integral of the magnetic billiard if the
	following conditions hold.
	
	1. $\Phi$ is an integral of the magnetic flow $g^t$ inside $\Omega$,
	$$\Phi(g^t(x,v))=\Phi(x,v);$$
	
	2. $\Phi$ is preserved under the reflections at the
	boundary $\partial\Omega$: for any
	$x\in\partial\Omega,$
	$$\Phi(x,v)=\Phi(x,v-2\langle n,v\rangle n),$$ for any $v\in T_x\Omega, |v|=1,$
	where $n$ is the unit normal to $\partial\Omega$ at $x$.
\end{definition}
Notice that the definition of polynomial integral does not depend on the choice of coordinates on $\Sigma$.
Using algebraic-geometry tools we shall prove the following:
\begin{theorem}\label{all}
	For any non-circular domain $\Omega$ on $\Sigma$, the magnetic
	billiard inside $\Omega$ is not algebraically integrable for all but
	finitely many values of $\beta$.
\end{theorem}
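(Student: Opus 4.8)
The engine of the argument is the structural fact quoted in the introduction: once the magnetic billiard in $\Om$ admits a polynomial integral for some value of $\be$, the boundary $\ga$ is a real-analytic arc of a real algebraic curve, and its complexification $\Gamma=\ga_{\mathbf{C}}$, viewed as a curve in the complexified quadric model of $\Sigma$ (the quadric $x_1^2+x_2^2+x_3^2=1$ in the spherical case and $x_1^2+x_2^2-x_3^2=-1$ in the hyperbolic case, both inside $\mathbf{C}^3$), is nonsingular. Since $\Gamma$ is attached to $\ga$ alone, it cannot by itself constrain $\be$; so the plan is to extract a \emph{second}, $\be$-dependent, necessary condition and to show that, for a fixed non-circular $\ga$, it can hold only for finitely many $\be$.

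\noindent\textbf{An algebraic dictionary for the magnetic data.} On a constant curvature surface the Larmor circles of geodesic curvature $\be$ are precisely the plane sections of the quadric model by the affine planes lying at a fixed Euclidean distance $d=d(\be)$ from the centre of the quadric ($d=\be/\sqrt{1+\be^2}$ on the sphere, and the analogous expression coming from $\be=\coth r$ in the hyperbolic case); these planes envelope an auxiliary quadric $\cQ_\be$, the ``magnetic absolute'', through which $\be$ enters the problem. I would rewrite the leading homogeneous part of a polynomial integral, together with the flow and reflection conditions of Definition~\ref{def}, in this language. Heuristically, the reflection law exchanges the two Larmor circles tangent to $\ga$ at a boundary point, the top-degree part of the integral must be insensitive to this exchange, and translating the resulting identities to $\mathbf{C}^3$ should force $\Gamma$ into a prescribed incidence/tangency position with respect to the family $\{\cQ_\be\}$ and to the absolute at infinity --- exactly as in the non-magnetic billiard, now with $\cQ_\be$ playing the role of the ambient quadric.

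\noindent\textbf{Reduction to a polynomial in $\be$, and rigidity.} Fix $\ga$ non-circular and suppose that a polynomial integral exists for infinitely many $\be$. For each such $\be$ the curve $\Gamma$ is the same nonsingular complex algebraic curve (a real-analytic arc determines at most one irreducible complex algebraic curve containing it), of fixed degree and genus, and the incidence relation of the previous paragraph becomes the vanishing of a single polynomial $R(\be)$ whose coefficients are built from the equation of $\Gamma$ --- for instance a resultant expressing that the divisor cut out on $\Gamma$ by $\cQ_\be$ is the one dictated by the integral, or that $\Gamma$ is tangent to $\cQ_\be$ along the absolute. Since it vanishes for infinitely many $\be$, $R\equiv0$. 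It then remains to show that $R\equiv0$ forces $\ga$ to bound a round disc. The quadrics $\cQ_\be$ form a pencil through the absolute; $R\equiv0$ says that the restriction of this pencil to $\Gamma$ is independent of $\be$, and unwinding this pins down the position of $\Gamma$ with respect to the quadric model and forces $\ga$ to have constant geodesic curvature, hence (being closed) to bound a round disc, contrary to assumption. A hands-on variant of the last step: differentiate $R$ in $\be$ and evaluate the resulting identities at a point of extremal geodesic curvature of $\ga$, reading off that successive derivatives of the curvature vanish, so $k$ is constant. This contradiction shows that for a non-circular $\Om$ the set of admissible $\be$ carrying a polynomial integral is finite.

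\noindent\textbf{Main obstacle.} The technical heart should lie in the first two steps \emph{at the points at infinity and on the absolute}: this is where the nonsingularity of $\Gamma$ has real content, where the pencil $\{\cQ_\be\}$ degenerates, and where the spherical and hyperbolic cases split through the signature of the ambient quadratic form --- so obtaining the incidence relation, and hence $R(\be)$, in a form valid uniformly in $\be$ will require care there. The rigidity implication ``$R\equiv0\Rightarrow$ round disc'' is the other delicate point, since it is precisely where the hypothesis that $\Om$ is non-circular gets consumed; and one must check that the dynamical admissibility constraint $\be<\min_{\ga}k$ still leaves infinitely many $\be$ available, so that the passage from ``infinitely many $\be$'' to ``$R\equiv0$'' is legitimate.
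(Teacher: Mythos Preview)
Your proposal rests on a misreading of the structural fact you invoke. Theorem~\ref{main} does \emph{not} assert that the complexification of $\gamma$ itself is nonsingular; it asserts that the complexified \emph{parallel curves} $\hat\gamma_{\pm r}$ (at geodesic distance $r$, where $\beta=\cot r$ on the sphere and $\beta=\coth r$ on the hyperbolic plane) must be nonsingular in $\mathbf C^3$. This is already a $\beta$-dependent obstruction, so the whole apparatus you propose for manufacturing a second, $\beta$-dependent condition via a ``magnetic absolute'' $\cQ_\beta$ and a resultant $R(\beta)$ is unnecessary --- and in your write-up it never becomes concrete: the incidence relation is stated only heuristically, $R(\beta)$ is never written down, and the rigidity implication ``$R\equiv0\Rightarrow$ round disc'' is asserted without argument.

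The paper's actual proof is short and geometric. For a non-circular convex $\gamma$, Proposition~\ref{perestroika} (the \emph{perestroika} of wavefronts) shows that the inner parallel curve $\gamma_{+r}$ acquires \emph{real} singularities precisely for $r\in[\rho_{\min},\rho_{\max}]$, where $\rho$ is the geodesic curvature radius of $\gamma$. One then observes that the defining polynomial $F_{+r}$ of $\gamma_{+r}$ is polynomial in the parameter $d=\tan r$ (resp.\ $\tanh r$), so the set of $d$ for which $\hat\gamma_{+r}$ is singular is the projection to the $d$-line of an algebraic set in $\mathbf C^4$, hence constructible; since it contains a whole real interval it is cofinite. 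Thus $\hat\gamma_{+r}$ is singular for all but finitely many $\beta$, and by Theorem~\ref{main} the magnetic billiard admits no polynomial integral for those $\beta$. The non-circularity hypothesis is consumed exactly by the inequality $\rho_{\min}<\rho_{\max}$, which guarantees the interval of singular fronts is nonempty.
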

Moreover we shall show below in Theorem \ref{main} that for the existence of Polynomial integral of motion, the parallel curves $\gamma_{\pm r}$ of the boundary $\gamma=\partial \Omega$ must be
non-singular algebraic curves in $\mathbf C^3$.

In what follows, we realize $\Sigma$ in $\mathbf{R}^3$ as the
standard unit sphere with the induced metric from $\mathbf{R}^3$, for the case of $K=1$, and as the upper sheet of the hyperboloid
$\{x_1^2+x_2^2-x_3^2=-1\}$ endowed with the metric
$ds^2=dx_1^2+dx_2^2-dx_3^2$, for the case $K=-1.$
It is convenient to introduce the diagonal matrices for the spherical and hyperbolic case respectively:
$$A=\rm{diag}\{1,1,1\} , \quad\quad A=\rm{diag}\{1,1,-1\}. $$
Then the upper sheet of the hyperboloid gets the form: $$\{<Ax,x>=-1\}\subset\mathbf{R}^3,\quad ds^2=<Adx,dx>$$ endowed with Lorentzian metric. In what follows we fix the orientation on $\Sigma$ by the unite normal which at the point $x$ equals $x$. The corresponding complex structure on $\Sigma$ will be denoted by $J$.

\section{\bf Parallel curves}
Let $\gamma$ be an oriented (the domain $\Omega$ bounded by $\gamma$ lies to the left) simple closed curve on $\Sigma$ parametrized by the arc-length $s$.
In what follows the central role is played by the curves $\gamma_{+t}, \gamma_{-t}$ on $\Sigma$ defined for any given $t>0$ by the formulas:
\begin{equation}\label{e1}
\gamma_{+t}=\exp (tJ\gamma'(s)),\quad \gamma_{-t}=\exp (-tJ\gamma'(s)),
\end{equation}
where $J$ is the complex structure (rotation by $\pi /2$ in the tangent plane with respect to the orientation determined by the normal to $\Sigma$; the normal at the point $x$ equals $x$) and $\exp$ is the exponential map of $\Sigma$.
Here and below, we write $\gamma'$ and $\dot Y$ for the derivatives with respect to $s$ and $t$ respectively.

These curves $\gamma_{\pm t}$ have many names. They are called parallel curves, equidistant curves, fronts or offset curves.  Let $\rho$ denotes geodesic curvature radius of $\gamma$. Then, $k=\cot \rho$ for $K=+1$, as for $K=-1$ for $\gamma$  which is convex with respect to horocycles, we have $k=\coth \rho$.

The following \textit {perestroika} occurs for any non-circular convex curve $\gamma$ on $\Sigma$:
\begin{proposition}\label{perestroika}
\begin{enumerate}[(A)]\
	\item  If $0<t<\rho_{min}$  then parallel curves $ \gamma_{+t}$ are smooth convex curves.
	\item If  $\rho_{min}\leq t\leq \rho_{max}$ then $ \gamma_{+t}$ necessarily has singularities.
	\item  If $\rho_{max}< t<\pi/2$ for the case $K=+1$ and $\rho_{max}\leq t$ for $K=-1$ then $ \gamma_{+t}$ is smooth again.
	\item The curve $ \gamma_{-t}$ is smooth and convex for any $t\in(0, \pi/2)$ in the case $K=1$ and for any positive $t$ for $K=-1$.
\end{enumerate}
\end{proposition}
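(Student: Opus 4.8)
The plan is to reduce everything to the first variation formula for the exponential map along a geodesic normal to $\gamma$, i.e. to analyze the Jacobi field generated by the variation $s\mapsto\exp(tJ\gamma'(s))$. Write $Y(s,t)=\exp(tJ\gamma'(s))$ for the parallel family; for fixed $s$ the curve $t\mapsto Y(s,t)$ is a unit-speed geodesic emanating from $\gamma(s)$ in the direction of the inward (resp.\ outward for $\gamma_{-t}$) normal $J\gamma'(s)$. The partial derivative $\partial_s Y$ is a Jacobi field $\mathcal J(t)$ along this geodesic with $\mathcal J(0)=\gamma'(s)$, and a standard computation (using the geodesic curvature equation $\nabla_{\gamma'}\gamma' = k\,J\gamma'$) gives $\mathcal J'(0) = -k(s)\,\gamma'(s)$ in the hyperbolic convention (the sign encodes that $\Omega$ lies to the left). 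Since the curvature of $\Sigma$ is $\pm1$, the Jacobi equation is the constant-coefficient ODE $\mathcal J''\pm \mathcal J=0$ in the parallel frame, so I can solve it explicitly: writing $\mathcal J(t)=f(t)\,E(t)$ for the parallel transport $E(t)$ of $\gamma'(s)$, one gets in the spherical case $f(t)=\cos t - k\sin t$, and in the hyperbolic case $f(t)=\cosh t - k\sinh t$ for $\gamma_{+t}$, while for $\gamma_{-t}$ the normal points the other way and $f(t)=\cos t + k\sin t$ (resp.\ $\cosh t + k\sinh t$). The curve $\gamma_{\pm t}$ is immersed precisely where $f(t)\neq 0$, its velocity is $f(t)$ times a unit vector, and a singular point occurs exactly when $f(t)=0$.

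Next I would translate the vanishing of $f$ into the statement of the Proposition using $k=\cot\rho$ ($K=+1$) and $k=\coth\rho$ ($K=-1$). For part (D), $\gamma_{-t}$: in the spherical case $f(t)=\cos t+\cot\rho\sin t = \sin(t+\rho)/\sin\rho>0$ for all $t\in(0,\pi/2)$ since $\rho\in(0,\pi/2)$; in the hyperbolic case $f(t)=\cosh t + \coth\rho\sinh t>0$ trivially for all $t>0$. For $\gamma_{+t}$: spherically $f(t)=\cos t-\cot\rho\sin t=\sin(\rho-t)/\sin\rho$, which is positive for $0<t<\rho$, zero at $t=\rho$, negative for $\rho<t<\pi/2$; ranging over $s$, $\rho=\rho(s)$ sweeps $[\rho_{\min},\rho_{\max}]$, giving: $f(t)>0$ for all $s$ when $t<\rho_{\min}$ (part A), $f(t)$ vanishes for some $s$ when $t\in[\rho_{\min},\rho_{\max}]$ (part B), and $f(t)<0$ for all $s$ when $\rho_{\max}<t<\pi/2$ (part C); hyperbolically $f(t)=\cosh t-\coth\rho\sinh t = \sinh(\rho-t)/\sinh\rho$ behaves the same way with the range $t>\rho_{\max}$ replacing $\rho_{\max}<t<\pi/2$ in part C, and no upper cutoff needed. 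In the non-singular ranges I still must check convexity, not just immersedness: I would compute the geodesic curvature $k_t$ of $\gamma_{+t}$ (again via the Jacobi field, differentiating $\partial_t\partial_s Y$), obtaining a formula such as $k_t = -f'(t)/f(t)$ up to the constant-curvature correction, and verify it has constant sign (with the correct orientation) on each of the intervals in (A), (C), (D). For $\gamma_{-t}$ one checks that $k_t$ stays on the convex side for all admissible $t$; for $\gamma_{+t}$ in range (C)/(hyperbolic tail) the curve has flipped orientation but is again convex as an unoriented curve.

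The main obstacle I anticipate is \emph{not} the immersedness dichotomy — that falls out of the explicit ODE solution — but rather (i) getting the orientation and sign conventions consistent throughout, especially the claim in (C) that past all the focal radii the front re-smooths with the reversed orientation (one must argue that the Jacobi field, having changed sign once, does not vanish again in the allowed range, which is where the upper bound $\pi/2$ in the spherical case is essential and where the horocycle-convexity hypothesis $\beta<\min_\gamma k$ enters to guarantee $\rho_{\max}<\pi/2$), and (ii) the convexity assertions, which require controlling the sign of $f'(t)$ relative to $f(t)$ on the relevant intervals and checking it does not degenerate. I would organize the proof as: Lemma (Jacobi field computation giving the velocity and geodesic curvature of $\gamma_{\pm t}$ in closed form), then four short case analyses (A)–(D) reading off the signs of $f$ and $k_t$.
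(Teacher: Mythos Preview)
Your approach is essentially identical to the paper's: both compute the Jacobi field of the normal variation, solve the constant-coefficient ODE to get $f(t)=\cos t - k\sin t$ (resp.\ $\cosh t - k\sinh t$), and read off the singular set from $f=0\Leftrightarrow t=\rho(s)$. Your write-up is in fact more detailed than the paper's, which simply states the Jacobi solution and says ``this fact implies all the cases''; your explicit sign analysis for (A)--(D) and your treatment of convexity via $k_t=-f'/f$ fill in what the paper leaves implicit (one small correction: in the spherical case $\rho_{\max}<\pi/2$ follows already from $k>0$, not from the hypothesis $\beta<\min_\gamma k$).
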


\begin{figure}[h]
	\centering
	\includegraphics[width=0.4\linewidth]{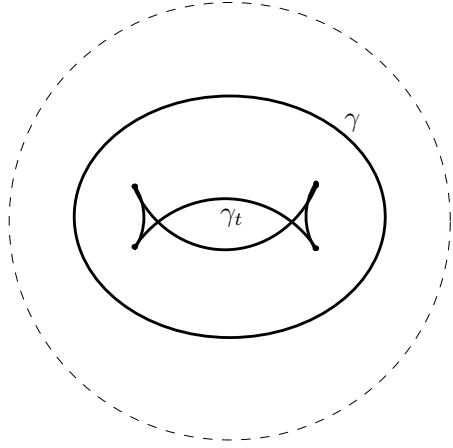}
	\caption{Singularities of the front $\gamma_t$}
	\label{2}
\end{figure}

\begin{proof}
Consider the family of geodesics $\gamma_s(t)=\exp (tJ\gamma'(s))$. Zeros of the Jacobi field $Y(s,t):=\partial_s \gamma_s(t) $ corresponding to this family is responsible for singularities of parallel curves. We have
$$\ddot Y+KY=0,\quad Y(s,0)=1,\quad \dot Y(s,0)=-k(s).$$
So we have
$$Y(s,t)=\cos t-k(s) \sin t,\quad for\quad K=1,$$
$$Y(s,t)=\cosh t-k (s)\sinh t,\quad for\quad K=-1.
$$
Then $$Y=0\Leftrightarrow  \cot t= k(s),$$	for the case $K=+1$
and$$
Y=0\Leftrightarrow  \coth t= k(s),
$$for the case $K=-1$.
This fact implies all the cases of Proposition \ref{perestroika}.
\end{proof}
Moreover one can easily derive the formulas for the geodesic curvature of the parallel curves for $t=\pm r$:
\begin{proposition}\label{curvature}
	\begin{enumerate}[(A)]\
\item $K=+1\Rightarrow$ $$ k_{+r}=\frac{\cot\rho \cot r+1}{\cot\rho -\cot r}=\frac{k\beta+1}{k-\beta},\ k_{-r}=\frac{\cot\rho \cot r-1}{\cot\rho +\cot r}=\frac{k\beta-1}{k+\beta}.
$$
\item $K=-1\Rightarrow$
$$ k_{+r}=\frac{\coth\rho \coth r-1}{\coth\rho -\coth r}=\frac{k\beta-1}{k-\beta},\ k_{-r}=\frac{\coth\rho \coth r+1}{\coth r+\coth\rho }=\frac{k\beta+1}{k+\beta}.
$$
\end{enumerate}
\end{proposition}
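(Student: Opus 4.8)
The plan is to read $k_{\pm r}$ off the Jacobi field of the geodesic congruence already written down in the proof of Proposition~\ref{perestroika}. First I would unpack the standing hypothesis: since $k=\cot\rho$, $\beta=\cot r$ for $K=+1$ (resp. $k=\coth\rho$, $\beta=\coth r$ for $K=-1$), and $\cot$ (resp. $\coth$) is monotone decreasing, the assumption $\beta<\min_{\gamma}k$ says exactly that $r>\rho_{max}\ge\rho(s)$ for every $s$. Hence $\gamma_{+r}$ is smooth by part (C) of Proposition~\ref{perestroika} and $\gamma_{-r}$ is smooth by part (D); moreover, writing $Y_{+}(s,t)=\cos t-k(s)\sin t$ and $Y_{-}(s,t)=\cos t+k(s)\sin t$ (with $\cos,\sin$ replaced by $\cosh,\sinh$ when $K=-1$) for the Jacobi fields of the families $\exp(\pm tJ\gamma'(s))$, one has $Y_{+}(s,r)<0$ and $Y_{-}(s,r)>0$ for all $s$.

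The key step is the identity
$$k_{+r}=\frac{\dot Y_{+}(s,r)}{Y_{+}(s,r)},\qquad k_{-r}=\frac{\dot Y_{-}(s,r)}{Y_{-}(s,r)} .$$
To prove it I would parametrise $\gamma_{\pm t}$ by the original arc-length $s$, so that its velocity vector is $Y_{\pm}(s,t)\,N(s,t)$, where $N$ is the unit normal field of the congruence and $T$ its unit tangent ($N=\mp JT$ according to whether the geodesics leave $\gamma$ in the direction $+J\gamma'$ or $-J\gamma'$). Commuting $\partial_s$ and $\partial_t$ gives $\nabla_sT=\nabla_t\bigl(Y_\pm N\bigr)=\dot Y_\pm N$ (using $\nabla_tN=0$), whence $\nabla_sN=-\dot Y_\pm\,T$; feeding this into the defining relation $\nabla_\sigma\tau=k_g\,J\tau$ for the geodesic curvature — and tracking the sign of $Y_\pm$ along the $s$-parametrisation, which reverses orientation where $Y_\pm<0$ — yields the two displayed formulas, and the special case $t=0$ checks the sign, $k_{\pm0}=k$. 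This bookkeeping of orientations and signs is, I expect, the only genuinely delicate point; in particular it is here that the regime $r>\rho_{max}$ is needed, since otherwise $Y_{+}$ would change sign with $s$ and the clean formula would be off by an overall sign (as written, it is valid precisely where $Y_{+}(s,r)<0$).

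Finally I would substitute the explicit expressions. For $K=+1$ one has $\dot Y_{\pm}(s,r)=-\sin r\pm k\cos r$ and $Y_{\pm}(s,r)=\cos r\pm k\sin r$; dividing numerator and denominator by $\sin r$ and using $k=\cot\rho$, $\beta=\cot r$ gives at once
$$k_{+r}=\frac{k\beta+1}{k-\beta}=\frac{\cot\rho\cot r+1}{\cot\rho-\cot r},\qquad
k_{-r}=\frac{k\beta-1}{k+\beta}=\frac{\cot\rho\cot r-1}{\cot\rho+\cot r},$$
which are the asserted identities; the case $K=-1$ is word for word the same with $\sin,\cos,\cot$ replaced by $\sinh,\cosh,\coth$. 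As a consistency check, the addition formulas recognise $\tfrac{k\beta+1}{k-\beta}=\cot(r-\rho)$ and $\tfrac{k\beta-1}{k+\beta}=\cot(\rho+r)$ (hyperbolically, $\coth$), reflecting that a parallel curve of a constant-curvature circle is again such a circle with radius shifted by $\pm r$.
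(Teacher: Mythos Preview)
Your approach is valid but genuinely different from the paper's, which is a one–line argument: at each point replace $\gamma$ by its osculating circle (since $k_{\pm r}$ depends only on $k$ at that point), observe that the parallel curves of a geodesic circle of radius $\rho$ are circles of radius $\rho\mp r$, and read off the curvature from the addition law for $\cot$ (resp.\ $\coth$). Your consistency check at the end is precisely this argument; the Jacobi–field computation you carry out is in effect a direct proof that $k_{\pm r}$ depends only on $k$, which the paper simply takes for granted via the osculating circle. So your route is longer and more analytic, the paper's is shorter and more geometric, and the two meet in the addition formula.

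One point deserves more care. As stated, your identity $k_{+t}=\dot Y_{+}/Y_{+}$ cannot hold uniformly: at $t=0$ one has $Y_{+}(s,0)=1$, $\dot Y_{+}(s,0)=-k$, so the right–hand side is $-k$, not $k$; your own check ``$k_{+0}=k$'' therefore fails for the plus sign. What actually comes out of the computation you sketch is $k_{+t}=-\dot Y_{+}/|Y_{+}|$ (and $k_{-t}=\dot Y_{-}/|Y_{-}|$, the sign difference coming from $JN=\pm T$ on the two congruences). At $t=r$, since you correctly observe $Y_{+}(s,r)<0$ under the standing hypothesis $r>\rho_{\max}$, this becomes $\dot Y_{+}/Y_{+}$ and your final substitution gives the right answer; similarly $Y_{-}(s,r)>0$ makes the minus case work. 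So the endpoint is correct, but the displayed identity and the ``$t=0$ check'' need to be rewritten with the sign of $Y_{+}$ explicitly present. (There is also a harmless $\pm/\mp$ slip in the line ``$\dot Y_{\pm}(s,r)=-\sin r\pm k\cos r$, $Y_{\pm}(s,r)=\cos r\pm k\sin r$'', which contradicts your own definitions $Y_{\pm}=\cos t\mp k\sin t$.)
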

\begin{proof}
It is enough to prove the formulas for circles, because at any point $\gamma$ can be approximated by the osculating circle. 	For the circles the formulas follow immediately from the trigonometry formulas of addition  for the functions $\cot, \coth$.
	\end{proof}
The following inequalities are immediate and will be crucial:
\begin{proposition}\label{inequality}
	\begin{enumerate}[(A)]\
	
		\item $K=1,\  k>\beta\quad\Rightarrow\quad
		 k_{+r}>\beta,\quad
		k_{-r}< \beta,$
		
		\item $K=-1,\  k>\beta>1\quad\Rightarrow \quad
		 k_{+r}>\beta,\quad
		1<k_{-r}< \beta.$
		
		In particular, in both Spherical and Hyperbolic cases $$k_{\pm r}\neq \beta.$$
	\end{enumerate}
\end{proposition}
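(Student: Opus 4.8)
The plan is to derive everything directly from the closed-form expressions in Proposition~\ref{curvature}, reducing each claimed inequality to the sign of an elementary rational function of $k$ and $\beta$. No further geometry is needed: the standing hypothesis $\beta<\min_{\gamma}k$, together with $\beta>1$ in the hyperbolic case, already fixes the signs of every denominator and of the quantity $\beta^2-1$ that will occur.

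In the spherical case $K=+1$ I would compute
$$
k_{+r}-\beta=\frac{k\beta+1}{k-\beta}-\beta=\frac{1+\beta^2}{k-\beta},\qquad
k_{-r}-\beta=\frac{k\beta-1}{k+\beta}-\beta=\frac{-(1+\beta^2)}{k+\beta}.
$$
Since $k>\beta>0$ the first denominator is positive and the numerator $1+\beta^2$ is positive, so $k_{+r}>\beta$; the second expression is negative, so $k_{-r}<\beta$. In the hyperbolic case $K=-1$ the same one-line manipulation gives
$$
k_{+r}-\beta=\frac{k\beta-1}{k-\beta}-\beta=\frac{\beta^2-1}{k-\beta},\qquad
k_{-r}-\beta=\frac{k\beta+1}{k+\beta}-\beta=\frac{1-\beta^2}{k+\beta},
$$
and now $\beta>1$ forces $\beta^2-1>0$ while $k>\beta$ keeps $k-\beta>0$, hence $k_{+r}>\beta$; the second fraction is negative because $\beta>1$, hence $k_{-r}<\beta$. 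For the remaining lower bound $k_{-r}>1$ in the hyperbolic case the extra line is
$$
k_{-r}-1=\frac{k\beta+1}{k+\beta}-1=\frac{(\beta-1)(k-1)}{k+\beta}>0,
$$
which is positive because $\beta>1$ and $k>\beta>1$.

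Finally, the assertion $k_{\pm r}\neq\beta$ in both cases is an immediate consequence of the strict inequalities just established, since $k_{\pm r}-\beta$ was shown to be a nonzero quantity of fixed sign. I do not expect any genuine obstacle here: the only thing to watch is the bookkeeping of the signs of $k\pm\beta$ and of $\beta^2-1$, and these are precisely the places where the hypotheses $\beta<\min_{\gamma}k$ and (in the hyperbolic case) $\beta>1$ enter the argument.
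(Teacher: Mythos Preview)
Your argument is correct and is exactly the kind of verification the paper has in mind: the authors simply declare the inequalities ``immediate'' from the formulas of Proposition~\ref{curvature} and give no further proof, and your one-line computations of $k_{\pm r}-\beta$ (and of $k_{-r}-1$ in the hyperbolic case) are precisely how one unpacks that word.
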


\section{\bf Larmor circles; the phase space of the magnetic billiard on $\Sigma$}\label{Larmor}

Recall that for the constant magnetic field of magnitude  $\beta$, the trajectories of the magnetic flows are
geodesic circles of radius $r$.

Throughout this paper we shall use the following construction.
Denote by $J$ the standard complex structure on $\Sigma$ and
introduce the mapping
\begin{equation}\label{L}{\mathcal L}: T_1\Omega \rightarrow
\Sigma,\quad
{\mathcal L}(x,v)=\exp_x(rJv),
\end{equation}
which assigns to every unit tangent vector $v\in T_x{\Omega}$ the
center of the unique Larmor circle passing through $x$ in the direction of $v$. Varying the unit vector
$v$ in $T_x{\Omega}$, for a fixed point $x\in\Omega$,  the
corresponding Larmor centers form a geodesic circle of radius $r$ centered at
$x$. The domain swept by all these circles when $x$ runs over
$\Omega$ will be denoted by $\Omega_r.$ Vice versa,  for any circle of radius $r$ lying in $\Omega_r$ its center
necessarily belongs to $\Omega$.

The domain $\Omega_r\subset\Sigma$ is a bounded domain
homeomorphic to an annulus and the curves $\gamma_{\pm r}$, are exactly the boundaries of $\Omega_r.$  Here $\gamma_{-r}$ lies on the outer boundary
of the annulus, and $\gamma_{+r}$ lies on the inner boundary.
Moreover it follows from Proposition \ref{curvature} that any circle of radius $r$ with the
center at $\gamma(s)$ is tangent to the outer boundary from inside
at $\exp_{\gamma(s)}(-rJ{\gamma}'(s))$, and to the inner boundary from
outside at the point $\exp_{\gamma(s)}(rJ{\gamma}'(s)).$ Moreover, apart
from these tangencies, this circle remains entirely inside
$\Omega_r$ (see Fig. \ref{1}).

\begin{figure}[h]
	\centering
	\includegraphics[width=0.4\linewidth]{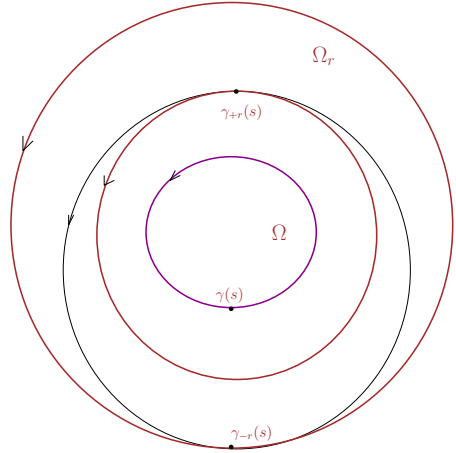}
	\caption{Circle of radius $r$ centered at $\gamma(s)$ is tangent to the boundary curves at $\gamma_{+r}(s)$ and $\gamma_{-r}(s)$  }
	\label{1}
\end{figure}

 In the sequel we need the formulas calculating the Larmor centers in the Spherical and Hyperbolic geometries.
 For the case of the standard unite sphere in $\mathbf {R}^3$ we choose positive normal at $x$ to be $x$ and have for the Larmor center:
\begin{equation}\label{e2}
\mathcal L (x,v)=\frac{\beta}{\sqrt{\beta^2+1}}x+\frac{1}{\sqrt{\beta^2+1}}[x,v]=\cos r\cdot x+\sin r [x,v],
\end{equation} where $[x,v]$ is just a positive unit normal vector to $v$  on $\Sigma$.

In the case of hyperboloid
we choose again  the positive normal to $\Sigma$ equal to $x$ and positive normal vector to $v$ on $\Sigma$ equals in this case $A[x,v] $.
So in the Hyperbolic case we have the formula:
\begin{equation}\label{e3}
\mathcal L (x,v)=\frac{\beta}{\sqrt{\beta^2-1}}x+                               \frac{1}{\sqrt{\beta^2-1}}A[x,v]=
\cosh r\cdot x+\sinh r A[x,v].
\end{equation}

Moreover, we introduce the mapping $$\M:\Omega_r\rightarrow\Omega_r$$
by the following rule: Let $C_-$ and $C_+$ be two Larmor circles
centered at $P_-$ and $ P_+$, respectively. We define
$$
\M(P_-)=P_+ \iff\ C_-\quad  \textrm{is\ transformed \ to}\quad C_+
$$
after billiard reflection at the boundary $\partial\Omega.$

It then follows easily that $\M:\Omega_r\rightarrow\Omega_r$ preserves the
standard symplectic form (area form) of $\Sigma$ and thus $\Omega_r$ naturally
becomes the phase space of the magnetic Birkhoff billiard. We shall
call $\M$ the magnetic billiard map.

Given a polynomial integral $\Phi$
of the magnetic billiard, we define the
function $F:\overline{\Omega}_r\rightarrow\overline{\Omega}_r$ by the requirement
\begin{equation}\label{F}
F\circ{\mathcal L}=\Phi.
\end{equation}
This is a well-defined construction, since $\Phi$ is an
integral of the magnetic flow, and therefore takes constant values
on any Larmor circle. Moreover, since $\Phi$ is invariant under the
billiard flow, $F$ is invariant under the billiard map $\M$:
\begin{equation}\label{invariance}
F\circ\M=F.
\end{equation}
Notice that since $\Phi$ is a polynomial in $v$ of degree $N$, the function $F$
satisfies the following  property: $F$ restricted to any circle of geodesic
radius $r$ lying in $\Omega_r$ is a trigonometric polynomial of
degree at most $N$.
Indeed, the Larmor circle centered at $x$ is obtained when the unit tangent vector $v$ varies in $T_x\Sigma$.
Choosing local coordinates $(\phi,\psi)$ which are Euclidean at the point $x\in\Sigma$ we have
$$\Phi=\sum_{ k+l=0}^N a_{kl}(x)v_{\phi}^kv_{\psi}^l , \quad v_{\phi}=\cos t, v_{\psi}=\sin t,$$
so $F$ indeed becomes a trigonometric polynomial in $t$.
The next theorem claims that in such a case $F$
is a restriction to $\Sigma$ of a polynomial function in $\mathbf R^3$. This theorem holds true both for Spherical and Hyperbolic case.
\begin{theorem}\label{harmonics} Let $\Omega_r$ be a domain in
	$\Sigma$ which is the union of all circles of radius $r$ whose
	centers run over a domain $\Omega$. Let $F:\overline{\Omega}_r\rightarrow\mathbf{R}$ be a
	continuous function such that the restriction of $F$ to any circle
	of radius $r$ of $\Omega_r$ is a trigonometric polynomial of degree
	at most $N.$ Then $F$ coincides with a restriction to $\Sigma$ of a polynomial function $\hat{F}$  in $(x_1,x_2,x_3)$ of degree at most
	$N.$
\end{theorem}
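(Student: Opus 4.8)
The plan is to exploit the fact that the family of circles of radius $r$ sweeping out $\Omega_r$ gives a kind of Radon-type transform, and to show that a function whose restriction to each such circle is a low-degree trigonometric polynomial must be the restriction of a genuine polynomial in the ambient coordinates. First I would set up local coordinates. Fix a center $x_0 \in \Omega$ and, for $x$ near $x_0$, parametrize the Larmor circle $C_x$ centered at $x$ by angle: a point on $C_x$ is $\Psi(x,t) := \exp_x(r(\cos t\, e_1(x) + \sin t\, e_2(x)))$ for a smooth orthonormal frame $e_1,e_2$ on $\Sigma$. By hypothesis, for each fixed $x$ the function $t \mapsto F(\Psi(x,t))$ is a trigonometric polynomial of degree $\le N$, so it is determined by its first $2N+1$ Fourier coefficients, equivalently by its values at $2N+1$ equally spaced points $t_j = 2\pi j/(2N+1)$. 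Thus $F$ on $\Omega_r$ is determined, near any point, by the finitely many functions $x \mapsto F(\Psi(x,t_j))$.

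The second and main step is an elliptic-regularity / finite-dimensionality argument. One shows that any continuous $F$ with this circle-trigonometric-polynomial property is automatically smooth, and in fact lies in the kernel of a suitable constant-coefficient-on-$\Sigma$ elliptic operator built from the Laplacian — concretely, I expect $F$ to satisfy $(\Delta_\Sigma + c_1)(\Delta_\Sigma + c_2)\cdots F = 0$ for constants $c_j = j^2 + (\text{lower order})$ determined by $r$ and the curvature, since differentiating the circle average twice in $t$ relates it, via the Jacobi-field computation already used in Proposition~\ref{perestroika}, to the spherical/hyperbolic Laplacian. More robustly: the space of functions on $\Sigma$ whose restriction to every radius-$r$ circle is a trig polynomial of degree $\le N$ is finite-dimensional (it is locally cut out by finitely many linear PDE with analytic coefficients, hence a sheaf of finite-dimensional vector spaces, and a connectedness argument globalizes it). On the sphere this space is spanned by spherical harmonics of degree $\le N$ — because $\Delta_{S^2}$ eigenfunctions restrict to circles as trig polynomials whose degree is controlled by the eigenvalue, and the restriction map to a single circle is injective on the span of low harmonics once one checks a nonvanishing condition on the relevant Gegenbauer/Legendre values at colatitude $r$ (this condition can fail only for finitely many $r$, but here it holds in the allowed range). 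Spherical harmonics of degree $\le N$ are exactly restrictions to $S^2$ of polynomials in $(x_1,x_2,x_3)$ of degree $\le N$, giving the claim for $K=+1$. For $K=-1$ one runs the identical argument with the hyperbolic Laplacian on the hyperboloid, where the analogues of harmonic polynomials — restrictions of ambient polynomials harmonic for the Lorentzian Laplacian — play the same role, and the degree bound transfers because $\Psi(x,t)$ depends on $t$ through $\cos t,\sin t$ polynomially in the ambient coordinates.

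The hard part, and the place where I would spend the most care, is the injectivity/nonvanishing input: one must verify that the linear map sending a degree-$\le N$ ambient polynomial (restricted to $\Sigma$) to its restriction to the family of radius-$r$ circles, or dually the map recovering the harmonic expansion from circle data, has no kernel. This amounts to checking that certain Jacobi-field factors $\cos t - k\sin t$ (resp.\ $\cosh t - k \sinh t$) — equivalently the values $P_n(\cos r)$-type quantities governing how a degree-$n$ harmonic restricts to a circle of colatitude $r$ — do not vanish, which by Proposition~\ref{inequality} is tied to the standing assumption $\beta < \min_\gamma k$ and $k_{\pm r} \ne \beta$. Once the finite-dimensionality and this nonvanishing are in place, the identification of the model space with ambient polynomials of degree $\le N$ is standard representation theory of $SO(3)$ (resp.\ $SO(2,1)$): the isotypic decomposition of polynomials modulo the ideal of $\Sigma$ matches the harmonic decomposition degree by degree, so $F = \hat F|_\Sigma$ with $\deg \hat F \le N$, completing the proof.
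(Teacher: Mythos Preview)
Your proposal outlines a plausible strategy but does not actually prove the key claims, and the route is quite different from the paper's. The paper proceeds by an elementary induction on $N$: one fixes a single circle $C_0 = \{x_3 = h\}$, subtracts a degree-$N$ ambient polynomial matching $F$ on $C_0$, and factors the difference as $(x_3-h)G$ via Hadamard's lemma. The heart of the argument is then purely Fourier-analytic: restricting to any other radius-$r$ circle $C$, the factor $(x_3-h)|_C$ is a degree-one trigonometric polynomial, so the Fourier coefficients $g_k$ of $G|_C$ satisfy a three-term linear recurrence for $|k|>N$. One checks directly that the characteristic roots are unimodular (this uses only that the distance between centers is less than $2r$, which follows from the standing hypothesis $\beta<\min k$), so any nonzero tail $(g_k)_{k\ge N}$ fails to decay, contradicting continuity. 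Hence $G$ has property $P_{N-1}$ and the induction closes. No spectral theory, no representation theory, no regularity argument is needed.

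Your approach, by contrast, rests on several assertions that are not established. The claim that $F$ is annihilated by a product $\prod_j(\Delta_\Sigma + c_j)$ is stated as something you ``expect,'' with no mechanism given for producing such an operator from the circle hypothesis; the Jacobi-field computation you cite yields a first-order relation along a single geodesic, not a global elliptic equation. The finite-dimensionality of the solution space is asserted via a one-line sheaf argument, but you never write down the PDE system, so it is not clear it is elliptic or even overdetermined. Most seriously, the identification of that space with spherical harmonics of degree $\le N$ is argued only in one direction (harmonics restrict to low-degree trig polynomials); you acknowledge the converse (``injectivity/nonvanishing'') as ``the hard part'' and then do not carry it out. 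Your appeal to Proposition~\ref{inequality} for this nonvanishing is misplaced: that proposition concerns the geodesic curvature $k_{\pm r}$ of the parallel curves and has nothing to do with zeros of Legendre-type functions $P_n(\cos r)$. Finally, the hyperbolic case is genuinely harder for your route, since $SO(2,1)$ is noncompact and there is no discrete harmonic decomposition to invoke; the paper's recurrence argument, by contrast, goes through verbatim with $\cosh,\sinh$ in place of $\cos,\sin$.
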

We shall prove this theorem below in Section \ref{two}.

Moreover, we will prove the following consequence of Theorem \ref{harmonics} which enables one to apply algebraic geometry methods:
\begin{proposition}\label{prop}
	Suppose that the magnetic billiard in $\Omega$ admits a polynomial
	integral $\Phi$ and let $\hat{F}$ be the corresponding polynomial. Then
	$$\hat{F}|_{\gamma_{\pm r}}={\rm const}.$$
\end{proposition}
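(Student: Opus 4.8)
The plan is to use the invariance $F\circ\M=F$ of the function $F$ associated with $\Phi$, combined with the tangency, recorded in Section \ref{Larmor}, of the Larmor circle $C(s)$ centered at $\gamma(s)$ to the boundary curve $\gamma_{+r}$ at the point $\gamma_{+r}(s)$ and to $\gamma_{-r}$ at the point $\gamma_{-r}(s)$, while $C(s)$ lies in $\overline{\Omega}_r$ and touches $\partial\Omega_r$ nowhere else. The decisive input of Theorem \ref{harmonics} is that $F$ is the restriction of the \emph{polynomial} $\hat F$, hence smooth, so that a symmetry of $F$ along $C(s)$ can be turned into a statement about $d\hat F$; for a merely continuous integral the argument would not close.

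First I would describe $\M$ along $C(s)$. Let $\nu_s$ be the geodesic through $\gamma(s)$ normal to $\gamma$ and let $R_{\nu_s}$ be the geodesic reflection in $\nu_s$, an isometry of $\Sigma$ fixing $\gamma(s)$. Every $O\in C(s)$ satisfies $\dist(O,\gamma(s))=r$, so the Larmor circle centered at $O$ passes through $\gamma(s)$; it is tangent to $\gamma$ there precisely when $O\in\nu_s$, i.e. when $O\in\{\gamma_{+r}(s),\gamma_{-r}(s)\}$. Hence $C(s)$ minus these two points is a union of two open arcs, on each of which the Larmor circle centered at $O$ crosses $\gamma$ transversally at $\gamma(s)$; under the standing hypothesis $\beta<\min_\gamma k$ a Larmor circle meets $\gamma$ in at most two points and the dynamics is well defined, and one checks that on exactly one of these arcs $\gamma(s)$ is the \emph{outgoing} impact point, so that the reflection defining $\M(O)$ occurs at $\gamma(s)$. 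On that arc $\M$ equals $R_{\nu_s}$: if $v_-,v_+$ denote the incoming and outgoing velocities at $\gamma(s)$, then $O=\exp_{\gamma(s)}(rJv_-)$ and $\M(O)=\exp_{\gamma(s)}(rJv_+)$ by the formula for $\mathcal{L}$; since $v_+$ is the mirror image of $v_-$ in the line $T_{\gamma(s)}\gamma$, the vector $Jv_+$ is the mirror image of $Jv_-$ in $T_{\gamma(s)}\nu_s$, and as $R_{\nu_s}$ fixes $\gamma(s)$ with differential that reflection, $R_{\nu_s}\bigl(\exp_{\gamma(s)}(rJv_-)\bigr)=\exp_{\gamma(s)}(rJv_+)$. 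Finally $R_{\nu_s}$ restricts to an involution of the circle $C(s)$ whose only fixed points are $\gamma_{+r}(s),\gamma_{-r}(s)$, interchanging the two arcs.

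Now $F\circ\M=F$, together with continuity of $F$ on $\overline{\Omega}_r$, yields $F|_{C(s)}\circ R_{\nu_s}=F|_{C(s)}$ (the identity holds on the dense open arc and its image, hence everywhere), so the smooth function $\hat F|_{C(s)}$ is $R_{\nu_s}$-invariant and therefore has a critical point at each of the two fixed points $\gamma_{+r}(s),\gamma_{-r}(s)$; equivalently $d\hat F$ annihilates the tangent line to $C(s)$ at those points. Because $C(s)$ is tangent to $\gamma_{+r}$ at $\gamma_{+r}(s)$ and to $\gamma_{-r}$ at $\gamma_{-r}(s)$, the very same tangent lines are the tangent lines to $\gamma_{+r}$ and $\gamma_{-r}$ there, so $d\hat F$ annihilates them as well. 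Letting $s$ vary, the derivative of $\hat F$ along $\gamma_{+r}$ and along $\gamma_{-r}$ vanishes identically; since $\beta<\min_\gamma k$ forces $r>\rho_{max}$, Proposition \ref{perestroika} shows $\gamma_{\pm r}$ are regular connected closed curves, whence $\hat F$ is constant on each of $\gamma_{+r}$ and $\gamma_{-r}$.

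The step requiring care is the identification of $\M$ along $C(s)$ with the geodesic reflection $R_{\nu_s}$: one must verify that, as $O$ runs over the relevant open arc of $C(s)$, the oriented Larmor circle centered at $O$ is reflected exactly at $\gamma(s)$ and not at its other intersection with $\gamma$, and that this arc is nonempty — which is precisely where convexity of $\gamma$ and the inequality $\beta<\min_\gamma k$ enter. Granting that, the remainder is the soft observation that an $\M$-symmetry along $C(s)$ forces a critical point of the smooth function $\hat F$ at the tangency points $\gamma_{\pm r}(s)$, and hence constancy of $\hat F$ on the parallel curves.
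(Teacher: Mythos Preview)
Your argument is correct and is essentially the paper's own proof, recast in the language of the involution $R_{\nu_s}$ on the circle $C(s)$: the paper parametrizes $P_\pm(\epsilon)=\exp_{\gamma(s)}(rJR_{\pm\epsilon}(\pm v))$, uses condition~2 of Definition~\ref{def} to get $F(P_-(\epsilon))=F(P_+(\epsilon))$, and differentiates at $\epsilon=0$ via the Jacobi field to obtain $dF|_{P_0}(Y(r))=0$ with $Y(r)$ tangent to $\gamma_{\pm r}$---precisely your ``critical point at the fixed point of the involution'' statement. Your care about which arc of $C(s)$ carries the outgoing impact at $\gamma(s)$ is unnecessary: the identity $F(P_-)=F(P_+)$ holds for \emph{all} unit vectors $v_-$ at $\gamma(s)$ directly from condition~2 (equivalently, from $F\circ\M=F=F\circ\M^{-1}$), so $F|_{C(s)}\circ R_{\nu_s}=F|_{C(s)}$ on the whole circle without any arc-by-arc analysis.
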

\begin{corollary}\label{circle}
	If magnetic billiard in $\Omega$ has integral $\Phi$ which is linear in velocities, then $\Omega$ is a round disc on $\Sigma$.
\end{corollary}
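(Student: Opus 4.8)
The plan is to combine Proposition \ref{prop} with Proposition \ref{curvature} (or rather its immediate consequence Proposition \ref{inequality}) and the classical fact that a linear-in-velocities integral of a magnetic flow corresponds to a Killing field, i.e. to a one-parameter family of isometries of $\Sigma$ preserving $\Omega$. Concretely: if $\Phi$ is linear in $v$, then the associated polynomial $\hat F$ on $\mathbf R^3$ has degree at most $1$, so $\hat F(x)=\langle b,x\rangle + c$ for some fixed vector $b\in\mathbf R^3$ and constant $c$. By Proposition \ref{prop}, $\hat F$ is constant on each of the parallel curves $\gamma_{\pm r}$, hence $\langle b,x\rangle$ is constant along $\gamma_{+r}$ and along $\gamma_{-r}$. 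This means each of $\gamma_{+r}$, $\gamma_{-r}$ lies in an affine hyperplane of $\mathbf R^3$, and the intersection of such a hyperplane with $\Sigma$ is a circle (in the spherical case) or a curve of constant geodesic curvature on the hyperboloid (a geodesic, a horocycle, an equidistant, or a genuine circle, depending on how the plane meets the cone).

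The first step is therefore to pin down which plane sections of $\Sigma$ are the admissible boundary curves $\gamma_{\pm r}$. In both geometries $\gamma_{-r}$ is a smooth convex curve (Proposition \ref{perestroika}(D)) of constant geodesic curvature $k_{-r}$; a constant-geodesic-curvature closed convex curve on the sphere or hyperbolic plane is a metric circle. One then has to rule out the degenerate plane sections: in the hyperbolic case $k_{-r}$ satisfies $1<k_{-r}<\beta$ by Proposition \ref{inequality}(B), so $\gamma_{-r}$ is a genuine circle (geodesic curvature $>1$), not a horocycle or equidistant. So in all cases $\gamma_{-r}$ is a circle on $\Sigma$, i.e. the set of points at fixed distance $r$ from a point $O\in\Sigma$, namely from the Larmor center that is common to all the Larmor circles tangent to $\gamma_{-r}$ from inside. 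Tracing back through the construction of $\Omega_r$ in Section \ref{Larmor}, the center of $\gamma_{-r}$ as a metric circle must be a single point, and that point is precisely the locus where the curve $\gamma=\partial\Omega$ collapses. But $\gamma_{-r}=\{\exp_{\gamma(s)}(-rJ\gamma'(s))\}$; if all these points are equidistant from a fixed $O$, differentiating gives that $O$ is at distance $\rho$... more simply, $\gamma$ itself is then forced to be the circle of radius $r$ about $O$. Hence $\Omega$ is a round disc.

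There is a shortcut that avoids re-deriving the geometry of plane sections: one can argue directly that $\gamma$ is a circle. Since $\hat F=\langle b,x\rangle+c$ is a first integral of the magnetic billiard map $\M$ on $\Omega_r$, the level set $\{\hat F=\text{const}\}\cap\Omega_r$ containing $\gamma_{-r}$ is $\M$-invariant; by the tangency picture in Fig. \ref{1}, the outer boundary component $\gamma_{-r}$ is the relevant level, and it being a plane section of $\Sigma$ forces it to be a circle as above, whose center $O$ then serves as the center for $\gamma=\partial\Omega$ because $\gamma(s)$ is obtained from $\gamma_{-r}(s)$ by moving geodesic distance $r$ along the inward normal $J\gamma'(s)$, and this normal passes through $O$ for a metric circle. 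Thus $\gamma$ has constant distance to $O$, i.e. $\Omega$ is a round disc centered at $O$.

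The main obstacle is the classification of the possible curves $\gamma_{\pm r}$ forced to lie in an affine hyperplane — specifically excluding the non-circular constant-curvature curves in the hyperbolic plane (geodesics, horocycles, equidistants) and making sure the conclusion "plane section $\Rightarrow$ circle $\Rightarrow$ $\gamma$ is a circle" is airtight. This is exactly where Proposition \ref{inequality} does the real work: it guarantees $k_{-r}>1$ in the hyperbolic case (and closedness/convexity already handles the spherical case), so the degenerate sections cannot occur and the curve is a genuine metric circle. The remaining deduction that $\Omega$ itself is round is then a short geometric argument using the explicit formula $\gamma_{-r}(s)=\exp_{\gamma(s)}(-rJ\gamma'(s))$ and the fact that the inward normals to a metric circle all meet at its center.
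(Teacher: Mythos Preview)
Your argument is correct and follows essentially the same route as the paper: Theorem~\ref{harmonics} gives $\deg\hat F\le 1$, Proposition~\ref{prop} then forces $\gamma_{\pm r}$ to lie in affine hyperplanes of $\mathbf R^3$, so they are plane sections of $\Sigma$, hence circles, and therefore $\gamma$ is a circle.

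Two remarks. First, your detour through Proposition~\ref{inequality} to exclude geodesics, horocycles, and equidistants in the hyperbolic case is superfluous: those plane sections of the hyperboloid are all non-compact, whereas $\gamma_{-r}$ is a closed curve by construction (Proposition~\ref{perestroika}(D)), so it must already be a genuine circle. Second, the Killing-field sentence at the opening is never used and can be deleted; and in the first paragraph the claim that ``$\gamma$ itself is then forced to be the circle of radius $r$ about $O$'' has the wrong radius---if $\gamma_{-r}$ is the circle of radius $R$ about $O$, then $\gamma$ is the concentric circle of radius $R-r$, exactly as your cleaner second paragraph shows.
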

\begin{proof}
	Indeed, by Theorem \ref{harmonics} $\hat F$ also has degree 1 and therefore by Proposition \ref{prop} the closed curves $\gamma_{\pm r}$ are intersections of $\Sigma$ with 2-planes, thus circles. Hence also $\gamma$ is a circle.
	\end{proof}

In view of the Corollary \ref{circle} we shall assume everywhere below that the degree $N>1$.

\begin{remark}\label{r}
	One can assume that
	the polynomial  $\hat{F}$ is such that the ${\rm
		constant}$ in Proposition \ref{prop} is $0,$ for both parallel
	curves $\gamma_{\pm r}$. Indeed, if $\hat{F}|_{\gamma_{- r}}=c_1$ and
	$\hat{F}|_{\gamma_{+ r}}=c_2,$ one can replace $\hat{F}$ by
	$\hat{F}^2-(c_1+c_2)\hat{F}+c_1\cdot c_2$ to annihilate both constants
	$c_1,c_2.$
\end{remark}

Since the curves $\gamma_{\pm r}$ lie in $\{\hat F=0\}\cap \{\Lambda-1=0\}$ we have:

\begin{corollary}\label{algebraic}
	 The curves ${\gamma}_{\pm r}$ and hence also ${\gamma}$ determine irreducible algebraic sets denoted by  $\hat{\gamma}_{\pm r}$ and $\hat{\gamma}$ in $\mathbf{C}^3$.
\end{corollary}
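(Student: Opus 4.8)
The plan is to define $\hat\gamma_{\pm r}$ and $\hat\gamma$ as the Zariski closures in $\mathbf{C}^3$ of the curves $\gamma_{\pm r}$ and $\gamma$, and to prove that these closures are irreducible algebraic curves. First I would check that $\gamma_{\pm r}$ lies on an honest algebraic curve. By Proposition~\ref{prop}, $\gamma_{\pm r}\subset\{\hat F=0\}\cap Q$, where $Q:=\{\Lambda-1=0\}$ is the nondegenerate, hence irreducible, complex quadric containing $\Sigma$. Since $N>1$ throughout, the integral $\Phi$ is nonconstant, and because $F\circ\mathcal L=\Phi$ with $\mathcal L(T_1\Omega)=\Omega_r$, the restriction $F=\hat F|_{\Omega_r}$ is nonconstant as well; in particular $\hat F$ does not vanish identically on $Q$ (note $\Omega_r\subset Q$). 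Therefore $\{\hat F=0\}\cap Q$ is a proper hypersurface section of the irreducible surface $Q$, hence a pure one-dimensional algebraic set; it contains the infinite sets $\gamma_{\pm r}$, so $\hat\gamma_{\pm r}$ are algebraic curves in $\mathbf{C}^3$.

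The crux is the irreducibility of $\hat\gamma_{\pm r}$. Under the standing assumption $\beta<\min_\gamma k$ one has $r>\rho_{max}$, so by Proposition~\ref{perestroika} the curves $\gamma_{+r}$ and $\gamma_{-r}$ are smooth simple closed curves; moreover, lying on the algebraic curve $\{\hat F=0\}\cap Q$, each of them is real-analytic (a smooth curve contained in an algebraic curve coincides, near every one of its points, with a single analytic branch of the latter). Now suppose $\hat\gamma_{+r}=V_1\cup V_2$ with $V_1,V_2$ proper algebraic subsets. Choose for each $i$ a polynomial $p_i$ vanishing on $V_i$ but not on $\hat\gamma_{+r}$; by Zariski density of $\gamma_{+r}$ in $\hat\gamma_{+r}$, the restriction $p_i|_{\gamma_{+r}}$ is a nonzero real-analytic function, so its zero set — which contains $V_i\cap\gamma_{+r}$ — is closed and nowhere dense in $\gamma_{+r}$. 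But $\gamma_{+r}=(V_1\cap\gamma_{+r})\cup(V_2\cap\gamma_{+r})$ cannot be a union of two nowhere dense closed sets (Baire category, or simply connectedness of $\gamma_{+r}$). Hence $\hat\gamma_{+r}$, and likewise $\hat\gamma_{-r}$, is irreducible.

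Finally, $\gamma$ is the parallel curve of $\gamma_{-r}$ at distance $r$, and by formulas~\eqref{e2}--\eqref{e3} a point of $\gamma$ is an explicit algebraic function of a point $p\in\gamma_{-r}$ together with the oriented tangent direction of $\gamma_{-r}$ at $p$ — and this tangent direction, at a smooth point, spans the line orthogonal to $\nabla\hat F(p)$ and $\nabla\Lambda(p)$, hence is itself algebraic in $p$. Thus, after fixing a consistent sign of a square root along the connected curve $\gamma$, $\gamma$ is the image of the irreducible curve $\hat\gamma_{-r}$ under an algebraic map, so $\hat\gamma$ is irreducible; equivalently, this places $\gamma$ on a proper algebraic subset of $\mathbf{C}^3$ and the argument of the previous paragraph applies to $\gamma$ directly. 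The one genuinely delicate point is the second step — deducing irreducibility of the Zariski closure from connectedness and real-analyticity of the curve — which I would isolate as a lemma; that $\hat F\not\equiv0$ on $Q$ is automatic from $N>1$, and the last step is bookkeeping with the Larmor/parallel-curve formulas.
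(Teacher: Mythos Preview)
Your argument is correct, and in fact you supply considerably more than the paper does: the paper offers no proof at all beyond the single sentence ``Since the curves $\gamma_{\pm r}$ lie in $\{\hat F=0\}\cap\{\Lambda-1=0\}$ we have \ldots''. The authors are implicitly invoking the standard fact that a connected smooth real arc lying inside a complex algebraic curve sits in a single irreducible component, and they leave the passage from $\gamma_{\pm r}$ to $\gamma$ entirely to the reader. Your proof makes both of these steps explicit.

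Two small remarks. First, your deduction $r>\rho_{\max}$ from $\beta<\min_\gamma k$ is right (since $\cot$, resp.\ $\coth$, is decreasing), and this is what lets you invoke Proposition~\ref{perestroika}(C) for the smoothness of $\gamma_{+r}$; note however that Proposition~\ref{perestroika}(C) only asserts smoothness, not simplicity, so ``smooth simple closed curves'' is a slight overstatement --- but simplicity is irrelevant to your irreducibility argument, which only needs a connected smooth immersion of $S^1$. Second, in the last step the phrase ``image of the irreducible curve $\hat\gamma_{-r}$ under an algebraic map'' is a bit loose, since normalizing the tangent introduces a square root; the clean way (which you also indicate) is to note that the offset of an algebraic curve on $\Sigma$ is again algebraic --- equivalently, lift to the algebraic set of pairs $(p,v)$ with $p\in\hat\gamma_{-r}$, $v$ a unit tangent, and project polynomially --- so that $\gamma$ lies on a genuine algebraic curve and your connected--analytic argument applies verbatim.
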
		
	
Next we use the folkloric fact that a curve of bidegree $(m,m)$ on a quadric is a complete intersection, i.e., intersection of the quadric with a surface of degree $m$ (we refer to \cite{H} for more details). Moreover,
using an appropriate  version of Noether theorem (\cite{W}, p. 226, Chapter VIII) we can summarize the needed algebraic-geometry facts:
\begin{theorem}\label{algebr}\
\begin{enumerate}[A.]

\item
The ideal of irreducible algebraic sets $\hat{\gamma}_{\pm r}$	is generated by two polynomials $F_{\pm r}$ and $(\Lambda - 1)$,
where $F_{\pm r}$ is irreducible in the ring \newline $\mathbf C[x_1,x_2,x_3]/{\rm mod} (\Lambda -1)$.
\item
In addition we have:
\begin{equation}
\label{B}
\hat F= F_{\pm r}^k\cdot g_{\pm}\  {\rm mod}(\Lambda -1),
\end{equation}
where polynomials $g_{\pm}$ do not vanish on $\hat{\gamma}_{\pm r}$ but in finitely many points.
\item
At all but finitely many points of $\hat{\gamma}_{\pm r}$ the differentials $DF_{\pm r}$ and $D\Lambda$  are not proportional, which means that the differential of the function  $F_{\pm r}|_{\Sigma}$ does not vanish on $\hat{\gamma}_{\pm r}$.
\end{enumerate}
\end{theorem}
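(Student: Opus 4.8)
The plan is to exhibit $\hat\gamma_{\pm r}$ as a complete intersection on the quadric and then to read off A, B, C from the two cited algebraic-geometry facts together with the normality of the quadric. Pass to the projective closure $\overline Q\subset\mathbf{P}^3$ of $Q=\{\Lambda=1\}$; the relevant quadratic form is nondegenerate of Lorentzian signature in both the spherical and the hyperbolic model, so $\overline Q$ is a smooth quadric surface, $\overline Q\cong\mathbf{P}^1\times\mathbf{P}^1$, and by Corollary \ref{algebraic} the projective closure $\overline{\hat\gamma}_{\pm r}$ is an irreducible curve on it, of some bidegree $(a,b)$. The first step is to show the bidegree is balanced, $a=b=:m$. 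Here the real structure is used: since $\gamma_{\pm r}\subset\mathbf{R}^3$, the set $\hat\gamma_{\pm r}$ is invariant under complex conjugation; a real quadric of Lorentzian signature has real points but no real line, so conjugation cannot preserve the two rulings of $\overline Q$ (through a real point each ruling would otherwise contain a conjugation-fixed, hence real, line) and must therefore interchange them, which forces a conjugation-invariant curve to meet the two rulings equally often.

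By the folkloric fact (\cite{H}) a curve of bidegree $(m,m)$ on $\overline Q$ is cut out by a surface of degree $m$: there is a homogeneous $F_{\pm r}$ of degree $m$ with $\overline Q\cap\{F_{\pm r}=0\}=\overline{\hat\gamma}_{\pm r}$. Since this intersection is an effective divisor of class $m(1,1)$ containing the reduced irreducible curve $\overline{\hat\gamma}_{\pm r}$ of the same class, it is reduced and coincides with $\overline{\hat\gamma}_{\pm r}$; hence the image of $F_{\pm r}$ in $R:=\mathbf{C}[x_1,x_2,x_3]/(\Lambda-1)$ has irreducible, reduced zero set $\hat\gamma_{\pm r}$ and so is a prime, in particular irreducible, element of $R$. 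Being a proper complete intersection, $\overline{\hat\gamma}_{\pm r}$ has its homogeneous ideal generated by the quadric and by $F_{\pm r}$ — this is the form of Noether's theorem quoted from \cite{W} — and dehomogenizing (legitimate because $\overline{\hat\gamma}_{\pm r}$ is irreducible, so acquires no spurious component at infinity) gives A.

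For B, Proposition \ref{prop} and Remark \ref{r} give $\hat F|_{\gamma_{\pm r}}=0$, hence $\hat F$ vanishes on $\hat\gamma_{\pm r}$ and $\hat F\in I(\hat\gamma_{\pm r})=(F_{\pm r},\Lambda-1)$; moreover $\hat F$ does not vanish identically on $\Sigma$ (otherwise $\Phi$ would be constant), so its image in $R$ is nonzero. Let $k\ge 1$ be the order of vanishing of $\hat F$ along $\hat\gamma_{\pm r}$ inside $Q$ (the local ring of $Q$ at the generic point of $\hat\gamma_{\pm r}$ is a discrete valuation ring, so this is well defined). Then $\hat F/F_{\pm r}^{\,k}$ is a rational function on the smooth affine surface $Q$ whose poles can lie only along $\{F_{\pm r}=0\}\cap Q=\hat\gamma_{\pm r}$, where by the choice of $k$ it has none; by normality of $Q$ it is regular, i.e. represented modulo $\Lambda-1$ by a polynomial $g_{\pm}$, which is \eqref{B}, and by maximality of $k$ this $g_{\pm}$ does not vanish identically on the irreducible curve $\hat\gamma_{\pm r}$, hence vanishes there at finitely many points only. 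For C, we have seen $\{F_{\pm r}=0\}\cap Q$ equals the reduced curve $\hat\gamma_{\pm r}$, which is thus smooth off a finite set; at a smooth point $p$ the surjection of regular local rings $\mathcal O_{Q,p}\twoheadrightarrow\mathcal O_{\hat\gamma_{\pm r},p}$ with kernel $(F_{\pm r})$ forces $F_{\pm r}\in\mathfrak m_p\setminus\mathfrak m_p^{\,2}$, i.e. $d(F_{\pm r}|_\Sigma)(p)\ne 0$, equivalently $DF_{\pm r}$ and $D\Lambda$ are not proportional at $p$.

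The step I expect to be the genuine obstacle is the balanced-bidegree claim: it really uses that $\gamma_{\pm r}$ is a real curve and the line geometry of a real quadric of Lorentzian signature, whereas everything afterwards is careful bookkeeping with the two cited facts and the normality of $Q$. A secondary point deserving care is that descending from the homogeneous statement on $\overline Q$ to the affine ideal in $\mathbf{C}[x_1,x_2,x_3]$ loses nothing — this is exactly where the irreducibility from Corollary \ref{algebraic} is needed.
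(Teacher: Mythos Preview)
Your proposal is correct and follows exactly the route the paper indicates: the paper does not actually prove Theorem~\ref{algebr} but merely states it as a summary of ``needed algebraic-geometry facts,'' citing the folkloric complete-intersection property of $(m,m)$ curves on a quadric \cite{H} and Noether's theorem from \cite{W}. You have supplied the details the paper omits, in particular the balanced-bidegree step, which the paper passes over in silence.

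One small wording caution: you write that ``the relevant quadratic form is nondegenerate of Lorentzian signature in both the spherical and the hyperbolic model.'' Taken literally for the affine form $\Lambda$ this is false in the spherical case (where $\Lambda=x_1^2+x_2^2+x_3^2$ is definite). What you actually need, and what your argument uses, is that the \emph{projective} quadric $\overline Q\subset\mathbf P^3$ has signature $(3,1)$ (equivalently $(1,3)$) in both models---$X_1^2+X_2^2+X_3^2-X_0^2=0$ for the sphere and $X_3^2-X_1^2-X_2^2-X_0^2=0$ for the hyperboloid---so that $\overline Q(\mathbf R)\cong S^2$ carries no real line and complex conjugation interchanges the two rulings. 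With that clarification your balanced-bidegree argument is sound, and parts B and C follow cleanly from the normality of $Q$ and generic smoothness of the reduced irreducible curve, as you wrote.
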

In the sequel we shall use the following homogenization.
Given a polynomial function on $\Sigma$ we extend it to the homogeneous function in the space away from the cone $\{\Lambda=0\}$. The extended homogeneous function we shall write with tilde.
Thus for  $\hat F=\sum_{k= 0}^N \hat F_k$ which is a sum of homogeneous components $\hat F_k$ of degree $k$, we
define
\begin{equation}\label{tilde}
\tilde F= \sum_{k= 0}^N \hat F_k \sqrt {\Lambda}^{N-k},
\end{equation}
where as above $\Lambda=(x_1^2+x_2^2+x_3^2)$ for the sphere, and $\Lambda=(-x_1^2-x_2^2+x_3^2)$ for the hyperboloid.
Then obviously, $\tilde F$ is a homogeneous function of degree $N$ and $$\tilde F|_{\Sigma}=\hat F|_{\Sigma}=F.$$
Similarly we define homogeneous functions $\tilde F_{\pm r}$ and $\tilde g _{\pm}$. So equation (\ref{B}) in homogeneous form reads:
\begin{equation}\label{k}
\tilde F=\tilde F_{\pm r}^k\cdot \tilde g_{\pm}	
\end{equation}
Moreover from the very construction the functions
$$
\tilde F;\tilde F_{\pm r}; \tilde g_{\pm}	
$$
all have the form
\begin{equation}\label{form}
p+q\sqrt{\Lambda},
\end{equation}
where $p$, $q$ are some homogeneous polynomials with the degree of $q$ is by one less than that of $p$. Therefore we have an important:

\begin{remark}
Function $\tilde F$  is analytic away from the absolute $\{\Lambda=0\}$.
\end{remark}

\subsection{Main result and example}
We now turn to the formulation of our main result:
\begin{theorem}\label{main}
	Let $\Omega$ be a convex bounded domain on $\Sigma$ with smooth
	boundary $\gamma=\partial\Omega$ which has curvature at least $\beta$ ($\beta>1$ in the Hyperbolic case). Suppose that the
	magnetic billiard in $\Omega$ admits a polynomial integral $\Phi.$
	Then the curves $\hat \gamma_{\pm r}$ are smooth algebraic curves in $\mathbf{C}^3$.
\end{theorem}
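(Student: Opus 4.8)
\medskip

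The plan is to show that the algebraic curve $\hat\gamma_{\pm r}\subset\mathbf C^3$ is smooth by exhibiting, at every one of its points, a defining function whose differential does not vanish. By Corollary \ref{algebraic} the curve $\hat\gamma_{\pm r}$ is the intersection of the quadric $\{\Lambda=1\}$ with the hypersurface $\{F_{\pm r}=0\}$, where by Theorem \ref{algebr}(A) the polynomial $F_{\pm r}$ generates (together with $\Lambda-1$) the ideal and is irreducible modulo $(\Lambda-1)$. By Theorem \ref{algebr}(C) the differentials $DF_{\pm r}$ and $D\Lambda$ fail to be proportional at all but finitely many points of $\hat\gamma_{\pm r}$; so smoothness is already known away from a finite "bad" set $S_{\pm}$, and the entire problem is to rule out these finitely many exceptional points. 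At a point $P\in S_{\pm}$ we have $DF_{\pm r}(P)=\mu\, D\Lambda(P)$ for some scalar $\mu$ (possibly $\mu=0$); the issue is to derive a contradiction with the billiard hypothesis, namely with the existence of the integral $\Phi$ and hence of the polynomial $\hat F$ satisfying $\hat F|_{\gamma_{\pm r}}=0$ (Proposition \ref{prop}, Remark \ref{r}) together with invariance under $\M$.

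\medskip

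The key steps, in order, are as follows. First, I would localize to a bad point $P\in\hat\gamma_{\pm r}$ and pass to the real picture: $P$ lies on $\Sigma$ and the real curve $\gamma_{\pm r}$ passes through $P$. Here I invoke Proposition \ref{perestroika}(A) and (D): under the standing hypothesis $\beta<\min_\gamma k$ (so $t=r<\rho_{\min}$), \emph{both} parallel curves $\gamma_{+r}$ and $\gamma_{-r}$ are smooth convex curves on $\Sigma$ — this is the crucial geometric input that distinguishes the present setting. So the \emph{real} locus is a smooth curve near $P$; any singularity of the complex curve $\hat\gamma_{\pm r}$ at a real point $P$ must therefore be of a type invisible on the real points. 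Second, I would use the factorization (\ref{k}), $\tilde F=\tilde F_{\pm r}^{\,k}\cdot\tilde g_{\pm}$, in homogeneous analytic form (all three factors of the shape $p+q\sqrt\Lambda$, analytic off $\{\Lambda=0\}$), to transfer information between $\hat F$ and $F_{\pm r}$: along $\hat\gamma_{\pm r}$ the function $\tilde g_\pm$ vanishes only at finitely many points (Theorem \ref{algebr}(B)), so at a generic point of $\hat\gamma_{\pm r}$ the vanishing order of $\hat F$ equals $k$ times that of $F_{\pm r}$, and smoothness of $F_{\pm r}$ is equivalent to $\hat F$ vanishing to order exactly $k$. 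Third — the heart of the matter — I would exploit the $\M$-invariance $F\circ\M=F$. Recall that any Larmor circle centered at $\gamma(s)$ is tangent to $\gamma_{-r}$ from inside at $\gamma_{-r}(s)$ and to $\gamma_{+r}$ from outside at $\gamma_{+r}(s)$ (Section \ref{Larmor}, Fig. \ref{1}), and the billiard map $\M$ fixes these boundary curves. The idea is that if $\hat F$ (equivalently $F_{\pm r}$) had a critical point at $P=\gamma_{\pm r}(s_0)$, one obtains an extra vanishing of the gradient of $F$ in the normal direction to $\gamma_{\pm r}$ at $P$; combined with the fact that the level set $\{F=0\}\cap\Sigma$ contains the smooth curve $\gamma_{\pm r}$, and with invariance under the area-preserving twist map $\M$ (which moves points along $\Omega_r$ transversally to the boundary curves in a monotone way near $\gamma_{\pm r}$), one forces $F$ to vanish identically on an open neighborhood in $\Sigma$, hence $\hat F\equiv 0$, contradicting that $\Phi$ is a nontrivial polynomial integral of degree $N>1$.

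\medskip

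I expect the main obstacle to be precisely this third step: turning "critical point of $F_{\pm r}$ at a boundary point" into "$F$ vanishes on an open set". The delicate points are (i) controlling the behaviour of the twist map $\M$ near the boundary $\gamma_{\pm r}$ of the annulus $\Omega_r$ — one needs that orbits of $\M$ accumulate on $\gamma_{\pm r}$ densely enough, or that the derivative of $\M$ along the boundary has the right non-degeneracy, so that a second-order vanishing of $F$ at one boundary point propagates; and (ii) handling the analytic (not polynomial) nature of $\tilde F$ across the absolute $\{\Lambda=0\}$ when bootstrapping from the tangency data of Larmor circles with the parallel curves to an identity of analytic functions. A cleaner alternative for (i), which I would try first, is a purely local argument: since $\gamma_{\pm r}$ is a \emph{smooth} real curve through $P$ (Proposition \ref{perestroika}) and $\hat F$ vanishes on it, Taylor-expand $\hat F$ in coordinates $(u,\tau)$ adapted to $\Sigma$ with $\{u=0\}=\gamma_{\pm r}$ locally; then $\hat F=u^m h$ with $h(0,\tau_0)\neq 0$ for the generic parameter, and $F_{\pm r}$ smooth at $P$ amounts to $m=1$ there. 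The extra condition $DF_{\pm r}\parallel D\Lambda$ at $P$ says the "defect" is measured by $\Lambda-1$; using (\ref{k}) and the irreducibility in Theorem \ref{algebr}(A) one should be able to show such a defect at an isolated point of the smooth real curve is impossible unless $F_{\pm r}$ is reducible mod $(\Lambda-1)$ or $\hat F$ degenerates — both excluded. Carrying out this local computation, and checking that the finitely many excluded points of Theorem \ref{algebr}(B),(C) do not conspire to sit exactly at real points of $\gamma_{\pm r}$, is where the real work lies.
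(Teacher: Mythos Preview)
There is a genuine gap in your plan, and it sits exactly where you already feel uneasy. Two points.

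First, a singular point $P$ of the \emph{complex} curve $\hat\gamma_{\pm r}\subset\mathbf C^3$ need not be real. Your localization step ``pass to the real picture: $P$ lies on $\Sigma$'' is unjustified, and hence Proposition~\ref{perestroika} (smoothness of the real parallel curves) gives you nothing at a genuinely complex singular point. Likewise, your ``cleaner alternative'' based on irreducibility does not close the gap: an irreducible curve on a quadric can very well be singular (nodes, cusps, \ldots), so irreducibility of $F_{\pm r}$ modulo $(\Lambda-1)$ together with smoothness of the real locus does not exclude isolated complex singularities. The dynamical propagation idea via $\M$ is also confined to the real phase space $\Omega_r$ and cannot by itself reach complex points.

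Second, and more importantly, you are missing the mechanism the paper uses. The $\M$-invariance (\ref{P}) is not exploited qualitatively but \emph{quantitatively}: expanding (\ref{PP}) to order $\epsilon^3$ (using the explicit Larmor-center formulas (\ref{e2}),\,(\ref{e3})) produces the identity
\[
\frac{{\rm Hess}\,\tilde F}{(N-1)^2}\mp\beta\,|\nabla\tilde F|^3=\text{const}\quad\text{on }\gamma_{\pm r},
\]
and the constant is shown to be \emph{nonzero} by Lemma~\ref{hom} (this combination equals $(k_{\pm r}\mp\beta)|\nabla\tilde F|^3$) together with Proposition~\ref{inequality} ($k_{\pm r}\neq\beta$). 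Passing to $\tilde F^{1/k}=\tilde F_{\pm r}\cdot\tilde g_{\pm}^{1/k}$ via (\ref{k}) and raising back to the $k$th power yields an analytic identity of the form $\tilde g_\pm^{3}\bigl(\text{polynomial in derivatives of }\tilde F_{\pm r}\bigr)^k=\text{nonzero const}$ on an arc of $\gamma_{\pm r}$. Because all ingredients have the shape $p+q\sqrt\Lambda$, this identity analytically continues along $\hat\gamma_{\pm r}$ to any point $P$, real or complex; at a singular point both ${\rm Hess}\,\tilde F_{\pm r}$ and $|\nabla\tilde F_{\pm r}|$ vanish, forcing the left side to be $0$, a contradiction. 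The crucial ingredients you are missing are thus (i) the $\epsilon^3$-computation yielding the Hessian identity, and (ii) the curvature inequality of Proposition~\ref{inequality}, which is precisely what prevents the constant from being zero. Neither your $\M$-orbit density argument nor the irreducibility argument can substitute for this.
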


Having this result it is easy to give a proof of Theorem \ref{all} .

\begin{proof}[Proof of Theorem \ref{all}]
	Indeed, it is easy to check that the polynomials $ {F}_{\pm r}$ depend on the variable $d=\tan r$ (in the Hyperbolic case $d=\tanh r)$ in a polynomial way, so
	$ {F}_{\pm r}$ is a polynomial in $x_1,x_2,x_3$, and $d$.
	Moreover, since
	$\gamma$ has positive curvature bounded from below by $\beta$, there
	is a whole open interval $r\in(\rho_{\min},\rho_{\max})$ where by Proposition \ref{perestroika} the parallel curve
	$\gamma_{+r}$ does have real singularities.
	Hence, the system of
	equations
	$$
	\partial_{x _1}\tilde F_{+r}=	\partial_{x _2}\tilde F_{+r}=	\partial_{x _3}\tilde F_{+r}=\tilde F_{+r}=\Lambda-1=0
	$$
	defines an algebraic set in ${\mathbf C}^4$ and its projection on
	the
	$d$-coordinate line is a Zariski open set.
	It then follows that singularities persist for all
	but finitely many $d.$
\end{proof}

\noindent {\bf Example.} Let $\Omega$ be the interior of the ellipse on the sphere, i.e. the intersection of the sphere with a quadratic cone
$$\partial\Omega=\left\{\frac{x_1^2}{a^2}+\frac{x_2^2}{b^2}=x_3^2\right\},\quad 0<b<a.$$
The equation of parallel curves for the ellipse is defined by the polynomial $\hat{F}$ of degree eight (see Appendix). The curve $\{\hat{F}=0\}$ on the sphere  is singular for arbitrary $a$ and $b$. For $a=2, b=1$ we have
$$
 \hat{F}=
((d^2-4)^2-10 (4+5 d^2+d^4) x_1^2+25 (1+d^2)^2 x_1^4) (5 x_1^2+d^2 (3+5 x_1^2)-3)^2+
$$
$$
4(1+d^2) (5 (1+d^2)(124+70 d^2+31 d^4) x_1^2-3 (32+60 d^2-45 d^4+7 d^6)-375 (1+d^2)^2\times
$$
$$
(3+d^2) x_1^4+625 (1+d^2)^3 x_1^6) x_2^2+4 (1+d^2)^2
(73 d^4-248 d^2-32-150 (4+7 d^2+3 d^4) x_1^2+
$$
$$
825
(1+d^2)^2 x_1^4) x_2^4+
64 (1+d^2)^3 (8-7 d^2+25(1+d^2) x_1^2) x_2^6+256(1+d^2)^4 x_2^8.
$$
By direct calculation we checked that the curve $\{\hat{F}=0\}$ is irreducible in the ring $\mathbf C[x_1,x_2,x_3]/{\rm mod} (\Lambda -1)$. Hence, by Theorem \ref{main} the magnetic billiard inside the ellipse is algebraically non-integrable for any magnitude of the magnetic field. It is plausible that the curve $\hat{F}=0$ is irreducible for arbitrary $a$ and $b$.

\section{\bf Proof of the main theorem}
The main step in the proof of main Theorem \ref{main} is the following result.
We stick to notations of Section \ref{Larmor}.

\begin{theorem}\label {constant} Let $\tilde F$ be a homogeneous function of degree $N$ which coincides with $F$ on $\Sigma$. Assume $\nabla \tilde F$ does not vanish on $\gamma_{\pm r}$ except for finitely many points. Then the identity
	\begin{equation}\label{both}
	\frac{\rm Hess\  \tilde F}{(N-1)^2}\mp \beta |\nabla\tilde F|^3=\const
	\end{equation}
	holds true for all points on the curve $\gamma_{\pm r}$.
	
	Moreover the constant  in the RHS of (\ref{both}) is not zero.
\end{theorem}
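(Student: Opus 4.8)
The plan is to derive the identity (\ref{both}) from the invariance $F\circ\M=F$ of the integral under the magnetic billiard map, by examining the behaviour of $F$ near the boundary curves $\gamma_{\pm r}$ of the annulus $\Omega_r$. The geometric picture is the key: by Proposition \ref{curvature} and the discussion in Section \ref{Larmor}, each Larmor circle centred at $\gamma(s)$ is internally tangent to the outer boundary $\gamma_{-r}$ at $\gamma_{-r}(s)$ and externally tangent to the inner boundary $\gamma_{+r}$ at $\gamma_{+r}(s)$, and otherwise stays strictly inside $\Omega_r$. Since $F$ is constant along every Larmor circle and, by Proposition \ref{prop} together with Remark \ref{r}, vanishes on $\gamma_{\pm r}$, the function $F$ attains an extremum (value $0$) along $\gamma_{\pm r}$ when restricted to the family of Larmor circles; this forces a first-order vanishing condition and a second-order condition relating $\mathrm{Hess}\,\tilde F$ and $|\nabla\tilde F|$ along $\gamma_{\pm r}$.

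Concretely, first I would introduce, near a point of $\gamma_{-r}$ (say), a convenient parametrization of $\Omega_r$ by $(s,u)$ where $s$ is arc length along $\gamma$ and the slice $u\mapsto$ (point at signed distance related to $u$ along the Larmor circle through $\gamma(s)$) sweeps the Larmor circle; the boundary $\gamma_{-r}$ corresponds to the tangency locus. Writing $F$ in these coordinates, constancy on Larmor circles means $F=F(s)$ and Proposition \ref{prop} means $F\equiv 0$ there; but to get a nontrivial identity one must differentiate the invariance relation (\ref{invariance}) transversally. The standard device (as in the authors' earlier billiard papers \cite{BM1,BM2,BM3}) is to expand $F$ to second order in the transverse variable near $\gamma_{\pm r}$: since $\nabla\tilde F\neq 0$ generically on $\gamma_{\pm r}$ by hypothesis, $F$ vanishes to exactly first order in the normal direction, and the coefficient of that normal derivative, compared along the Larmor circle and across the reflection via $\M$, produces after a computation the quantity $\mathrm{Hess}\,\tilde F/(N-1)^2 \mp \beta|\nabla\tilde F|^3$. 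The sign $\mp$ corresponds to the two cases $\gamma_{\pm r}$ (inner vs. outer boundary, i.e. external vs. internal tangency of the Larmor circles), and the factor $(N-1)^2$ enters because $\tilde F$ is homogeneous of degree $N$ so that its Hessian restricted to $\Sigma$ carries the Euler-relation factors. The output is that this combination is invariant under the dynamics generated by $\M$ restricted to (the limit on) $\gamma_{\pm r}$, and since that dynamics is a rotation-like map with dense orbits on the curve (or, alternatively, since the curve is connected and the combination is a continuous function constant along orbits), the combination must be constant on all of $\gamma_{\pm r}$.

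For the final assertion that the constant is nonzero, I would argue by contradiction: if the constant were $0$, then along $\gamma_{\pm r}$ one would have $\mathrm{Hess}\,\tilde F = \pm(N-1)^2\beta|\nabla\tilde F|^3$, and I would combine this with the relation between $\mathrm{Hess}\,\tilde F$, the geodesic curvature of the level curve $\{\hat F=0\}$, and $|\nabla\tilde F|$ on $\Sigma$ — essentially, the geodesic curvature of $\gamma_{\pm r}$ expressed through $\tilde F$ equals (up to the homogeneity factor) $\mathrm{Hess}\,\tilde F/|\nabla\tilde F|^3$. Vanishing of the constant would then force the geodesic curvature $k_{\pm r}$ of $\gamma_{\pm r}$ to equal $\pm\beta$ identically, contradicting Proposition \ref{inequality}, which asserts $k_{\pm r}\neq\beta$ in both the spherical and hyperbolic cases. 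I expect the main obstacle to be the clean bookkeeping in the second step: correctly setting up the transverse expansion near the singular-looking tangency locus, tracking how the billiard reflection law transforms the second-order jet of $F$, and pinning down the exact constants (the $(N-1)^2$ and the sign), rather than any conceptual difficulty — the conceptual input is entirely contained in Propositions \ref{curvature}, \ref{inequality}, \ref{prop} and Theorem \ref{harmonics}.
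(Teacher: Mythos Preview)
Your argument for the non-vanishing of the constant is exactly the paper's: the geodesic curvature of the level curve $\{\tilde F=0\}\cap\Sigma$ equals $\mathrm{Hess}\,\tilde F/((N-1)^2|\nabla\tilde F|^3)$ (this is Lemma \ref{hom}), so vanishing of the constant would force $k_{\pm r}=\pm\beta$, contradicting Proposition \ref{inequality}. That part is fine.

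The gap is in your mechanism for obtaining (\ref{both}) itself. You propose to show that the combination $\mathrm{Hess}\,\tilde F/(N-1)^2\mp\beta|\nabla\tilde F|^3$ is invariant under ``the dynamics generated by $\M$ restricted to $\gamma_{\pm r}$'' and then invoke dense orbits. But the boundary curves $\gamma_{\pm r}$ consist of \emph{fixed points} of $\M$: the point $P_0=\gamma_{\pm r}(s)$ is the centre of the Larmor circle tangent to $\gamma$ at $\gamma(s)$, and the reflection at a tangency sends this circle to itself, so $\M(P_0)=P_0$. There is no rotation-like boundary map and no density to exploit; your fallback ``constant along orbits plus connectedness'' yields nothing since each orbit is a single point. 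What the paper actually does is different in kind: one writes the invariance $F(P_-(\epsilon))=F(P_+(\epsilon))$ with $P_\pm(\epsilon)=\mathcal L(x_0,R_{\pm\epsilon}(\pm v_0))$ explicitly via (\ref{e2})--(\ref{e3}), observes that the difference is odd in $\epsilon$ (the $\epsilon^1$ term recovers Proposition \ref{prop}), and then computes the $\epsilon^3$ coefficient. The crucial point is that this $\epsilon^3$ coefficient organises into a \emph{total tangential derivative}:
\[
L_v\Bigl(\tfrac{\mathrm{Hess}\,\tilde F}{(N-1)^2}\mp\beta|\nabla\tilde F|^3\Bigr)=0
\]
along $\gamma_{\pm r}$ (equation (\ref{lvboth})). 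Constancy on the connected curve then follows by integration, with no dynamical argument. Note also that the relevant expansion is to \emph{third} order in the angle $\epsilon$, not second order in a transverse variable; the leading motion of $P_\pm(\epsilon)$ is tangential to $\gamma_{\pm r}$, so a purely transverse second-order expansion would not see the identity.
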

We shall prove Theorem \ref{constant} in Section \ref{cube}.
Now we are in position to complete the proof of the main Theorem \ref{main}.

\begin{proof}[Proof of Theorem \ref{main}]
In order to fulfill the assumption of Theorem \ref{constant} we need to pass
from  $\tilde F$ to $\tilde F^{\frac{1}{k}}$ and use equation (\ref{k}).
  Notice that  $\tilde F^{\frac{1}{k}}$ is also conserved by the map $\M$.
	In the proof we consider  the curve $\gamma_{+
		r}$ (the proof for the curve $\gamma_{-r}$ is identical).
	It then follows from Theorem \ref{algebr} that we can write
	$$ \tilde F=\tilde F_{+r}^k\cdot \tilde g_+,
	$$ for some integer $k\geq 1$, so that $\nabla \tilde F_{+r}$ and $\tilde g_+$ do not
	vanish on  $\gamma_{+ r}$ except for finitely many points. Let us take some arc $\delta$ of $\gamma_{+ r}$ with this property.
	We may assume that $\tilde g_+$ is positive on $\delta$ (otherwise we change the signs).
	Therefore the equation (\ref{both}) can be
	derived in the same manner for the function
	$$\tilde F^{\frac{1}{k}}=\tilde F_{+r}\cdot \tilde g_+^{\frac{1}{k}},$$
	for all points of the arc $\delta$.
	This function is homogeneous of degree
	$$
	p=N/k>1.
	$$ Thus we have instead of (\ref{both}):
	\begin{equation}\label{rem4}
	\frac{1}{(p-1)^2}{\rm Hess} (\tilde F_{+r}\cdot \tilde g_+^{\frac{1}{k}})+\beta|\nabla (\tilde F_{+r}\cdot
	\tilde g_+^{\frac{1}{k}})|^3={\rm const}, \quad (x_1,x_2,x_3)\in\delta.
	\end{equation}
	Moreover the constant on the RHS of (\ref{rem4}) is not zero, by Theorem \ref{constant}.
	Using the identities
	$$
	{\rm Hess}(\tilde F_{+r}\cdot \tilde g_+^{\frac{1}{k}})=\tilde g_+^{\frac{3}{k}}{\rm Hess}(\tilde F_{+r}),\quad \nabla
	(\tilde F_{+r}\cdot \tilde g_+^{\frac{1}{k}})=\tilde g_+^{\frac{1}{k}}\nabla (\tilde F_{+r}),
	$$ which are valid for all $(x_1,x_2,x_3)\in\{\tilde F_{+r}=0\}$,
	we obtain from (\ref{rem4}) that
	\begin{equation}\label{rem44}
	\tilde g_+^{\frac{3}{k}}\left({\frac{1}{(p-1)^2}\rm Hess}(\tilde F_{+r})+\beta|\nabla \tilde F_{+r}|^3\right)={\rm const}, \quad
	(x_1,x_2,x_3)\in\delta.
	\end{equation}
	Raising back to the power $k$ we get
	\begin{equation}\label{rem5}
	\tilde g_+^{{3}}\left({\frac{1}{(p-1)^2}\rm Hess}(\tilde F_{+r})+\beta|\nabla \tilde F_{+r}|^3\right)^k={\rm const}, \
	(x_1,x_2,x_3)\in\delta.
	\end{equation}
	Let us prove now, that the curve $\hat\gamma_{+r}$ is non-singular in $\mathbf C^3$.
	We argue by contradiction. Suppose, there exist a singular point $P$ of $\hat\gamma_{+r}\subset \{\Lambda-1=0\} \subset\mathbf C^3$.
	Pick any point $Q\in\delta$, and consider a path $\alpha$ on the algebraic curve $\hat\gamma_{+r}$ going from $Q$ to $P$ avoiding
	the singular points of $\tilde F_{+r}$.
	Using the particular form of $\tilde F_{+r}$ and  $\tilde g_+$ given by Theorem \ref{algebraic} we see that the equation
	(\ref{rem5}) remains valid for
	analytic continuation of the functions $\tilde F_{+r}$,  $\tilde g_+$   along the path $\alpha$. Hence it remains valid also at the point $P$. But this is impossible, because the LHS is zero at $P$ while the constant is not zero at the RHS.
	The proof is completed.
\end{proof}

 \section{\bf Boundary values of the integral.}
\label{three} In this Section we prove Proposition \ref{prop}.

  Take a point $x$ on $\gamma$.  Let $v$ be a positive unit
 tangent vector to $\gamma$. Let $C_-$ and $C_+$ be the incoming and
 outgoing circles with the unit tangent vectors $v_-$ and $v_+$ at
 the impact point $x$. We are interested in the two cases when the
 reflection angle between $v$ and $v_-$ is close to $0$ or to $\pi$. These two possibilities correspond to the following cases:

 \begin{equation}\label{ab}
 (a)\quad\quad v_-=R_{-\epsilon}v,\quad\ \quad v_+=R_{\epsilon}v,
 \end{equation}
 $$(b)\quad v_-=R_{\epsilon}(-v),\quad v_+=R_{-\epsilon}(-v),$$
 where $R_{\epsilon}$ is the counterclockwise rotation of the tangent plane $T_x\Sigma$
 by a small angle $\epsilon$, see Fig. \ref{3}.

 We define
 $$ P_-(\epsilon)={\mathcal L}(x,v_-)=\exp_x(rJ(v_-)),
 \quad P_+(\epsilon)={\mathcal L}(x,v_+)=\exp_x(rJ(v_+)).
 $$
 In the case (a) we have
 \begin{equation}\label{a}
 P_-(\epsilon)=\exp_x (rJ(R_{-\epsilon}v)),\quad
 P_+(\epsilon)=\exp_x (rJ(R_{\epsilon}v)).
 \end{equation}
 In the case (b), \begin{equation}\label{b}
 P_-(\epsilon)=\exp_x (-rJ(R_{\epsilon}v)),\quad
 P_+(\epsilon)=\exp_x (-rJ(R_{-\epsilon}v)).
 \end{equation}
We write:
$$P_{-}:=P_-(\epsilon),\ P_+:=P_+(\epsilon),P_0:=P_-(0)=P_+(0).$$
\begin{figure}[h]
	\centering
  \includegraphics[width=0.5\linewidth]{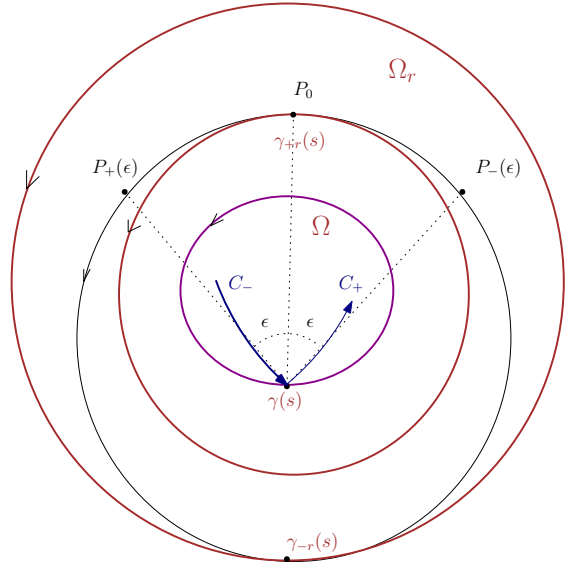}
	\caption{The center $P_-$ of the circle $C_-$ is mapped by $\M$ to the center $P_+$ of $C_+$ }
	\label{3}
\end{figure}

 Notice that in case (a) the middle point of the short arc that
 connects the points $P_-(\epsilon)$ and $P_+(\epsilon)$ is
 $P_0=\exp_x(rJ(v))\in\gamma_{+r}$, while for the case (b) the middle
 point is $P_0=\exp_x(rJ(-v))\in\gamma_{-r}$.
 \begin{proof}[Proof of Proposition \ref{prop}]
 	The condition 2. of Definition \ref{def} reads in terms of $F$
 	\begin{equation}\label{P}
 	F(P_-(\epsilon))=F(P_+(\epsilon)).
 	\end{equation}
 	Differentiating this equality with respect to $\epsilon$ at
 	$\epsilon=0$
 	we compute in
 	the case (a):
 	$$\frac{d}{d\epsilon}{|_{_{\epsilon=0}}}F(P_-(\epsilon))=dF|_{P_0}\left (\frac{d}{d\epsilon}{|_{_{\epsilon=0}}}P_-(\epsilon)\right )=dF|_{P_0}(Y(r)),$$
 	$$\frac{d}{d\epsilon}|_{\epsilon=0}F(P_+(\epsilon))=dF|_{P_0}\left (\frac{d}{d\epsilon}{|_{_{\epsilon=0}}}P_+(\epsilon)\right )=dF|_{P_0}(-Y(r)),
 	$$
 where $Y(t)$ is an orthogonal Jacobi field along the geodesic $\exp_x(tJv)$ corresponding to the radial family of geodesics $\exp_x(tR_{\epsilon}Jv)$. Thus $Y(r)$ is tangent to $\gamma_{+r}$ at the point $P_0$.
 The last two equalities together imply :
 $$
 dF|_{P_0}(Y(r))=0,
 $$ so $F$ has a constant value on $\gamma_{+r}$.
 Analogously one treats the case (b).
This completes the proof of Proposition \ref{prop}
\end{proof}

\section{\bf Remarkable equation}

In this Section we deduce the  remarkable equation expressing (\ref{invariance}) for a function $F:\overline{\Omega}_r\rightarrow\overline{\Omega}_r$ invariant under the map $\mathcal M$. This equation is valid for those $F$
 which have non-vanishing gradient
at a point on the boundary of $\Omega_r$. Moreover we may assume that $n_{\pm r}=A\frac{\nabla F}{|\nabla F|}$ is a positive unite normal to $\gamma_{\pm r}$, i.e. the basis $(\dot{\gamma}_{\pm r}, n_{\pm r})$ is positive (otherwise we change the sign of $F$).
In order to perform computations we need to rewrite the equation (\ref{invariance}) and hence (\ref{P})  in terms of the homogeneous function $\tilde F$ defined by (\ref{tilde}).

 We fix a point $(x_0,\pm v_0), \ v_0=\dot \gamma,\ n_0=Jv_0$ and rewrite (\ref{P}):
\begin{equation}\label{PP}
F({\mathcal L}(x_0,R_{-\epsilon}(\pm v_0)))=F({\mathcal L}(x_0,R_{+\epsilon}(\pm v_0)))
,
\end{equation}
where the sign $+$ and $-$ correspond to the cases  (a) and  (b) in (\ref{ab}) respectively.

In what follows we develop (\ref{ee}) in $\epsilon $ at $\epsilon=0$. This will contain derivatives $\tilde F$
at the point $P_0={\mathcal L}(x_0,v_0)$ for the case (a) (at $P_0={\mathcal L}(x_0,-v_0)$ for the case (b)).
We shall denote by $y_0$ the point $P_0$ viewed in $\mathbf{R}^3$.
Notice that since $\gamma_{\pm r}$ are parallel curves to $\gamma$ their tangent vector at $P_0$ viewed in $\mathbf{R}^3$ is exactly $\mp v_0$.
Moreover it follows from Euler formula for $\tilde F$ that the positive unite normal to $\gamma_{\pm r}$ at $P_0$ as a vector of $\mathbf{R}^3$ equals
$$
n_{\pm r}(y_0)=A\frac{\nabla\tilde F}{|\nabla\tilde F|}\left(y_0\right).
$$
In addition,
\begin{equation}\label{v0}
v_0=-A\frac{[y_0,A\nabla\tilde F]}{|\nabla\tilde F|}\left(y_0\right).
\end{equation}
One more thing we need is to rewrite positive normal vector $n_0$ to $\gamma$ at $x_0$ via $\tilde F$ at the point $y_0$. This can be done as follows.  Vectors $n_0$ and $n_{\pm r}$ are unite vectors which are tangent (with opposite orientation) to the same geodesic $\exp_{x_0}(tJv_0)$ at the points which are at distance $r$ apart.  Then we compute in the spherical case:
\begin{equation}\label{n0s}
n_0=\pm(-\cos r\cdot n_{\pm r}(y_0)+\sin r \cdot y_0)=\pm(-\cos r\cdot \frac{\nabla\tilde F}{|\nabla\tilde F|}\left(y_0\right)+\sin r \cdot y_0).
\end{equation}
In the hyperbolic case this formula reads:
\begin{equation}\label{n0h}
n_0=\pm(-\cosh r\cdot n_{\pm r}(y_0)-\sinh r \cdot y_0)=\pm(-\cosh r\cdot A \frac{\nabla\tilde F}{|\nabla\tilde F|}\left(y_0\right)-\sinh r \cdot y_0).
\end{equation}
Now we shall substitute into (\ref{PP})  the expression
$$
R_{\epsilon}(\pm v_0)=\cos \epsilon (\pm v_0)+\sin \epsilon (\pm n_0)
$$
together with the explicit formulas (\ref{v0}), (\ref{n0s}), (\ref{n0h}).

We shall consider the cases of sphere and hyperboloid separately.

Case 1.  In the case of sphere we get by the substitution the RHS of (\ref{PP}) :
\begin{equation}\label{ee}F({\mathcal L}(x_0,R_{+\epsilon}(\pm v_0)))=
\tilde F\left (\cos r x_0+\sin r[x_0,R_{\epsilon}(\pm v_0)]\right)=
\end{equation}
$$=\tilde F\left (\cos r x_0+\sin r[x_0,(\cos \epsilon (\pm v_0)+\sin \epsilon (\pm n_0))]\right)=$$
$$
=\tilde F\left (y_0+\sin r\(\cos {\epsilon}-1)[x_0,(\pm v_0)]+\sin r\sin \epsilon [x_0,(\pm n_0)]\right)=
$$
$$
=\tilde F\left (y_0\pm\sin r\(\cos {\epsilon}-1)n_0\mp\sin r\sin \epsilon v_0\right)=
$$
$$
=\tilde F\left (y_0+\sin r\(\cos {\epsilon}-1)\left(-\cos r\cdot \frac{\nabla\tilde F}{|\nabla\tilde F|}\left(y_0\right)+\sin r \cdot y_0\right)\pm\right.
$$
$$
 \left.\sin r\sin \epsilon \frac{[y_0,\nabla\tilde F]}{|\nabla\tilde F|}\left(y_0\right)\right).
$$
So finally we can rewrite equation (\ref{PP}) as
\begin{equation}\label{PPP}
\tilde F\left (y_0+\sin r\(\cos {\epsilon}-1)\left(-\cos r\cdot \frac{\nabla\tilde F}{|\nabla\tilde F|}\left(y_0\right)+\sin r \cdot y_0\right)\mp\right.
\end{equation}
$$
 \left.\sin r\sin \epsilon \frac{[y_0,\nabla\tilde F]}{|\nabla\tilde F|}\left(y_0\right)\right)
$$
$$
=\tilde F\left (y_0+\sin r\(\cos {\epsilon}-1)\left(-\cos r\cdot \frac{\nabla\tilde F}{|\nabla\tilde F|}\left(y_0\right)+\sin r \cdot y_0\right)\pm\right.
$$
$$
 \left.\sin r\sin \epsilon \frac{[y_0,\nabla\tilde F]}{|\nabla\tilde F|}\left(y_0\right)\right).
$$
Case 2. In the hyperbolic case formulas are similar:

\begin{equation}\label{eeh}F({\mathcal L}(x_0,R_{+\epsilon}(\pm v_0)))=
\tilde F\left (\cosh r x_0+\sinh rA[x_0,R_{\epsilon}(\pm v_0)]\right)=
\end{equation}
$$=\tilde F\left (\cosh r x_0+\sinh rA[x_0,(\cos \epsilon (\pm v_0)+\sin \epsilon (\pm n_0))]\right)=$$
$$
=\tilde F\left (y_0+\sinh r\(\cos {\epsilon}-1)A[x_0,(\pm v_0)]+\sinh r\sin \epsilon A [x_0,(\pm n_0)]\right)=
$$
$$
=\tilde F\left (y_0\pm\sinh r\(\cos {\epsilon}-1)n_0\mp\sinh r\sin \epsilon v_0\right)=
$$
$$
=\tilde F\left (y_0-\sinh r\(\cos {\epsilon}-1)\left(\sinh r \cdot y_0+\cosh r\cdot A \frac{\nabla\tilde F}{|\nabla\tilde F|}\left(y_0\right)\right)\pm\right.
$$
$$
 \left.\sinh r\sin \epsilon A\frac{[y_0,A\nabla\tilde F]}{|\nabla\tilde F|}\left(y_0\right)\right).
$$
Thus in the hyperbolic case we get finally:
\begin{equation}\label{PPPh}
\tilde F\left (y_0-\sinh r\(\cos {\epsilon}-1)\left(\sinh r \cdot y_0+\cosh r\cdot A \frac{\nabla\tilde F}{|\nabla\tilde F|}\left(y_0\right)\right)\mp\right.
\end{equation}
$$
 \left.\sinh r\sin \epsilon A\frac{[y_0,A \nabla\tilde F]}{|\nabla\tilde F|}\left(y_0\right)\right)=
$$
$$
=\tilde F\left (y_0-\sinh r\(\cos {\epsilon}-1)\left(\sinh r \cdot y_0+\cosh r\cdot A \frac{\nabla\tilde F}{|\nabla\tilde F|}\left(y_0\right)\right)\pm\right.
$$
$$
 \left.\sinh r\sin \epsilon A\frac{[y_0,A \nabla\tilde F]}{|\nabla\tilde F|}\left(y_0\right)\right).
$$
\section{\bf Terms of $\epsilon^3$ and proof of Theorem \ref{constant}.}\label{cube}
In this Section we compute and put in a very compact form terms of order $\epsilon^3$ of equations (\ref{PPP}), (\ref{PPPh}). Then we prove Theorem \ref{constant}.

In order to write the terms of order $\epsilon ^3$ of the equations (\ref{PPP}), (\ref{PPPh}) we first rewrite the argument of (\ref{PPP}):
\begin{equation}\label{PPP1}
 y_0+\sin r\(\cos {\epsilon}-1)\left(-\cos r\cdot \frac{\nabla\tilde F}{|\nabla\tilde F|}\left(y_0\right)+\sin r \cdot y_0\right)\mp\sin r\sin \epsilon \frac{[y_0,\nabla\tilde F]}{|\nabla\tilde F|}\left(y_0\right)=
\end{equation}
$$
 \left.=y_0(1-2\sin^2r\sin^2 (\epsilon/2))+2\sin r\cos r\sin^2(\epsilon/2)\frac{\nabla\tilde F}{|\nabla\tilde F|}(y_0)\mp\right.
$$
$$
 \sin r\sin \epsilon \frac{[y_0,\nabla\tilde F]}{|\nabla\tilde F|}\left(y_0\right)=
$$
$$
(1-2\sin^2r\sin^2 (\epsilon/2))\left(y_0+
\frac{\nabla\tilde F}{|\nabla\tilde F|}\frac{2\sin r\cos r\sin^2(\epsilon/2)}{1-2\sin^2 r\sin^2 (\epsilon/2) }\mp\right.
$$
$$
 \left.\frac{\sin r\sin \epsilon }{1-2\sin^2 r\sin^2 (\epsilon/2)}\frac{[y_0,\nabla\tilde F]}{|\nabla\tilde F|}\right).
$$
We can neglect the factor in front of the brackets, since the function $\tilde F$ is homogeneous. So up to order $\epsilon ^3$ we have for the argument :

$$
y_0+\frac{\sin r\cos r }{2}\epsilon^2\frac{\nabla\tilde F}{|\nabla\tilde F|}\mp \left(\sin r \epsilon+\left(\frac{1}{2}\sin^3 r-\frac{1}{6}\sin r\right )\epsilon^3\right)\cdot\frac{[y_0,\nabla\tilde F]}{|\nabla\tilde F|}.
$$
Using this one can write the third order expansion for (\ref{PPP}) and analogously for (\ref{PPPh}).  It turns out that the terms of order $\epsilon ^3$ in the equations (\ref{PPP}), (\ref{PPPh}) can be organized so that they are complete derivatives along the tangent vector $v$ to the curves $\gamma_{\pm r}$ (it is given through $F$ by formula(\ref{v0})). On the Sphere this reads:
\begin{equation}\label {Lv}
L_v\left(\frac{\rm Hess\  \tilde F}{(N-1)^2}\mp \cot r |\nabla\tilde F|^3\right)=0.
\end{equation}
As for the Hyperbolic case:
\begin{equation}\label {Lvh}
L_v\left(\frac{\rm Hess\  \tilde F}{(N-1)^2}\mp \coth r |\nabla\tilde F|^3\right)=0.
\end{equation}
So in both cases we can write (\ref{Lv}) and (\ref{Lvh}) as
\begin{equation}\label{lvboth}
L_v\left(\frac{\rm Hess\  \tilde F}{(N-1)^2}\mp \beta |\nabla\tilde F|^3\right)=0.
\end{equation}
Now we are in position to prove Theorem \ref{constant}.

\begin{proof}[Proof of Theorem \ref{constant}]
	The identity (\ref{both}) follows from (\ref{lvboth}). We need to show that the constant in the RHS of (\ref{lvboth}) is not zero.
	For the proof we need the following important formula for the geodesic curvature of curves on $\Sigma$ which we prove in Section \ref{homogen}.
\begin{lemma}\label{hom}
	Let $\tilde F$ be a homogeneous function in $\mathbf{R}^3$ of degree $N>1$.
		The geodesic curvature of the curve $\{\tilde F=0\}$ on $\Sigma$ (with respect to the normal $A\frac{\nabla\tilde F}{|\nabla\tilde F|}$) at a non-singular point
		is given in both geometries by the same formula:
		\begin{equation}\label{kg}
		k=\frac{\rm Hess\  \tilde F}{(N-1)^2|\nabla\tilde F|^3}.
		\end{equation}
	\end{lemma}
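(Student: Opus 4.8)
The plan is to compute the geodesic curvature of the curve $\{\tilde F = 0\}$ directly from the definition, exploiting homogeneity to convert ambient derivatives in $\mathbf{R}^3$ into intrinsic data on $\Sigma$. First I would set up coordinates: fix a non-singular point $y_0 \in \{\tilde F=0\}\cap\Sigma$, let $v$ be the unit tangent to the curve at $y_0$ (so $v \perp_A \nabla\tilde F$ and $v\perp_A y_0$), and let $n = A\frac{\nabla\tilde F}{|\nabla\tilde F|}$ be the chosen unit normal in $T_{y_0}\Sigma$. The geodesic curvature is $k = \langle A\nabla_v v, n\rangle$ where $\nabla$ is the Levi-Civita connection of $\Sigma$; since $\Sigma$ sits in $(\mathbf{R}^3, A)$ as a (pseudo-)sphere, $\nabla_v v$ is the tangential-to-$\Sigma$ part of the ambient derivative $\frac{d}{ds}(v\circ\alpha)$ for a unit-speed parametrization $\alpha$ of the curve, the ambient connection being the flat one. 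The normal-to-$\Sigma$ direction is $y_0$ (with $\langle Ay_0,y_0\rangle = \pm 1$), so one projects off that component.

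Next I would differentiate the defining relation. Parametrize the curve by arclength $s\mapsto \alpha(s)$ on $\Sigma$ with $\alpha(0)=y_0$, $\alpha'(0)=v$. Differentiating $\tilde F(\alpha(s)) \equiv 0$ once gives $\langle\nabla\tilde F, v\rangle = 0$ (consistent with $n\propto A\nabla\tilde F$); differentiating twice gives
$$
\langle \mathrm{Hess}_{\mathbf{R}^3}\tilde F\cdot v, v\rangle + \langle \nabla\tilde F, \alpha''(0)\rangle = 0,
$$
where $\mathrm{Hess}_{\mathbf{R}^3}\tilde F$ is the ordinary Hessian matrix. Now decompose $\alpha''(0)$ into its component along $y_0$ (the second fundamental form term of $\Sigma$ in $\mathbf{R}^3$, which is fixed and known: it equals $-\langle Av,v\rangle\, y_0 = -y_0$ up to sign, since the curve has unit speed on the unit (pseudo-)sphere) plus its component along $n$ (which is $k$, the geodesic curvature we want) plus its component along $v$ (which vanishes by unit speed). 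Pairing the twice-differentiated identity with the appropriate vectors isolates
$$
k = \frac{\langle \mathrm{Hess}_{\mathbf{R}^3}\tilde F\cdot v,\, v\rangle}{|\nabla\tilde F|}\cdot(\text{sign/normalization factor}),
$$
after accounting for $\langle \nabla\tilde F, n\rangle = |\nabla\tilde F|$ and for the contribution of the $y_0$-component of $\alpha''$ which pairs against $\langle\nabla\tilde F, y_0\rangle$.

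The key remaining step — and the place where the degree-$N$ homogeneity and the notation $\mathrm{Hess}$ in the statement must be reconciled — is to show that $\langle \mathrm{Hess}_{\mathbf{R}^3}\tilde F\cdot v, v\rangle$, together with the $y_0$-component correction, assembles into the quantity denoted $\mathrm{Hess}\,\tilde F$ divided by $(N-1)^2$. Here I would use Euler's identity in the forms $\langle\nabla\tilde F, y_0\rangle = N\tilde F(y_0) = 0$ and $\mathrm{Hess}_{\mathbf{R}^3}\tilde F\cdot y_0 = (N-1)\nabla\tilde F$. The latter lets one replace the $y_0$-direction second-fundamental-form term by a multiple of $\nabla\tilde F$, and explains the appearance of $(N-1)$; combined with whatever intrinsic quantity the paper has abbreviated as $\mathrm{Hess}\,\tilde F$ (presumably the bordered/Hessian determinant $\det\begin{pmatrix}\mathrm{Hess}_{\mathbf{R}^3}\tilde F & \nabla\tilde F^{\,T}\\ \nabla\tilde F & 0\end{pmatrix}$ or the restriction of the Hessian to the tangent plane, expressed invariantly), the factor $(N-1)^2$ emerges by homogeneity degree counting. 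The main obstacle is thus purely bookkeeping: carefully tracking the signs coming from the Lorentzian inner product in the hyperbolic case and verifying that the two geometries collapse to the single formula \eqref{kg} — this is exactly where working with $A$ uniformly (rather than case by case) pays off, since $\langle Ay_0,y_0\rangle = K$ and the sign of the second fundamental form of $\Sigma\subset(\mathbf{R}^3,A)$ both carry a factor $K$ that cancels in the final ratio. Once the formula is established at one non-singular point, it holds at all of them by the same computation.
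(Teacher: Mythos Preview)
Your setup is essentially the paper's: both routes lead to the intermediate formula $k=-\dfrac{\langle Hv,v\rangle}{|\nabla\tilde F|}$, where $H$ is the ordinary $3\times 3$ Hessian matrix of $\tilde F$ (you reach it by differentiating $\tilde F\circ\alpha\equiv 0$ twice, the paper by the Frenet relation $k=-\langle\nabla_v n,Av\rangle$; these are equivalent). Note also that the ``$y_0$-component correction'' you mention actually vanishes outright, since $\langle\nabla\tilde F,y_0\rangle=N\tilde F(y_0)=0$ by Euler, so there is nothing to assemble from it.

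The genuine gap is the step you label ``purely bookkeeping''. First, ${\rm Hess}\,\tilde F$ in the paper is simply $\det H$, not a bordered Hessian or a restricted quadratic form; your uncertainty here is the symptom. Second, the passage from $\langle Hv,v\rangle$ to $\det H/\big((N-1)^2|\nabla\tilde F|^2\big)$ is the actual content of the lemma, and you have not indicated any mechanism for it. The paper's device is to insert the explicit cross-product form $v=-A[x,A\nabla\tilde F]/|\nabla\tilde F|$ and then use Euler's identity in the form $Hx=(N-1)\nabla\tilde F$ to write $\langle Hv,v\rangle$ as $\dfrac{1}{(N-1)^2|\nabla\tilde F|^2}\langle H[Ax,Hx],[Ax,Hx]\rangle$. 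The determinant is then extracted via the cofactor identity $[Ha,Hb]=(\det H)\,H^{-1}[a,b]$ for symmetric $H$, followed by the Lagrange identity for $\langle[a,b],[c,d]\rangle$; a second application of Euler, $\langle Hx,x\rangle=N(N-1)\tilde F(x)=0$, kills one of the two resulting terms and the surviving one reduces to $\langle Ax,x\rangle^2=1$ in both geometries. None of this is sign-chasing: without the cross-product/cofactor trick (or an equivalent linear-algebra argument computing $\det H$ in the frame $(x,\nabla\tilde F,v)$), there is no way to turn the single quadratic form value $\langle Hv,v\rangle$ into the full $3\times 3$ determinant.
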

We prove the Lemma in Section \ref{homogen}.

It follows from Lemma \ref{hom} that if the constant in (\ref{both}) is zero then the geodesic curvature of the curve $\gamma_{\pm r}$ equals $\pm \beta$. But this cannot happen due to the inequality of Proposition \ref{inequality}.
This completes the proof of Theorem \ref{constant}.
\end{proof}

\section {\bf Geodesic curvature in homogeneous coordinates.}\label{homogen}

	\begin{proof}[Proof of Lemma \ref{hom}]
		 Set positive normal and the tangent vector to the curve at the point $x$:
		 \begin{equation}\label{nv}
		 n=A\frac{\nabla\tilde F}{|\nabla\tilde F|},\quad v=-A\frac{[x,A\nabla\tilde F]}{|\nabla\tilde F|}.\end{equation}
		Then we use the formula for geodesic curvature on $\Sigma$ using Frenet formula:
		$$
		k=-<\nabla_v n, Av>,
		$$
		where $\nabla$ is the standard flat connection on $\mathbf{R}^3$.
		$$
		k=-\left<\nabla_v\left(A\frac{\nabla\tilde F}{|\nabla\tilde F|}
\right) , Av\right>=-\frac{1}{|\nabla\tilde F|}<\nabla_v\nabla \tilde F,v>=-\frac{1}{|\nabla\tilde F|}<Hv,v>,
		$$
		where $H$ denotes the  matrix of second derivatives of $\tilde F$. Notice that we omitted another term containing the derivative of $\frac{1}{|\nabla\tilde F|}$ using the fact that $<A\nabla \tilde F, Av>=0$. Next we use Euler identities for the derivatives of $\tilde F$:
		\begin{equation}\label{euler}
Hx=(N-1)\nabla \tilde F.
		\end{equation}
		Thus we continue using (\ref{nv}), (\ref{euler}):
		$$
		k=-\frac{1}{|\nabla\tilde F|^3}<H[Ax,\nabla \tilde F], [Ax,\nabla \tilde F]>=
		$$
		 $$
		 =-\frac{1}{(N-1)^2|\nabla\tilde F|^3}<H[Ax,Hx], [Ax,Hx]>=
		 $$
		$$
		 =-\frac{{\rm Hess }\tilde F}{(N-1)^2|\nabla\tilde F|^3}<[Ax,Hx], [H^{-1}Ax,x]>=
		$$
		$$
		=-\frac{{\rm Hess }\tilde F}{(N-1)^2|\nabla\tilde F|^3}\left(<Ax,H^{-1}Ax><Hx,x>-<Ax,x><Hx, H^{-1}Ax>\right),
		$$
		where ${\rm Hess }\tilde F$ is the determinant of $H$.
		
		At last we notice that
		$$
		<Hx,x>=(N-1)<\nabla\tilde F,x>=(N-1)N\cdot\tilde F(x)=0
		,$$ by Euler identity. Thus we get
		$$
		k=\frac{{\rm Hess }\tilde F}{(N-1)^2|\nabla\tilde F|^3}<Ax,x>^2=\frac{{\rm Hess }\tilde F}{(N-1)^2|\nabla\tilde F|^3},
		$$
		which yields (\ref {kg}).
	\end{proof}

 \section{\bf Proof of Theorem \ref{harmonics}}\label{two}
 In the proof we follow our strategy from \cite{BM3}.

 \begin{proof} Let us assume first that $F$ is a $C^\infty$-function. We shall say that $F$ has
 	property $P_N$ if the restriction of $F$ to any circle of radius $r$
 	lying in $\Omega_r$ is a trigonometric polynomial of degree at most $N$.
 	The proof of Theorem \ref{harmonics} goes by induction on the degree $N$.
 	
 	1) For $N=0$, the lemma obviously holds, since if $F$ has property
 	$P_0$, then $F$ is a constant on any circle of radius $r$ and hence
 	must be a constant on the whole $\Omega_r$, because any two points of $\Omega_r$
 	can be connected by a union of a finite number of circular arcs of
 	radius $r$.
 	
 	2) Assume now that any function satisfying property $P_{N-1}$ is a restriction to $\Sigma$ of a polynomial function in $(x_1,x_2,x_3)$ of degree at most
 	$N-1.$
 	
 	Let $F$ be any smooth function on $\Omega_r$ with property $P_N$. Fix a point $x_*\in\Omega$ and consider  the circle
 	$C_0$ of radius $r$ centered in $x_*$. Applying an appropriate isometry of $\Sigma$ we may assume that $$x_*=(0,0,1)\in\mathbf{R}^3 \quad  {\rm and} \quad  C_0=\{x_3=h\},$$
 	where $h=\cos r$ in the spherical case and $h=\cosh r$ for hyperboloid.
 	Obviously there exists a polynomial $\hat{F}_0$ in $(x_1,x_2,x_3)$ of degree at most $N$ such that    satisfying
 	${F}|_{C_0}=\hat F_0|_{C_0}.$ Then applying Hadamard's lemma to the function $F-\hat F_0$
 	one can find a $C^{\infty}$ function
 	$G:\Omega_r\rightarrow\mathbf{R}$ such that
 	\begin{equation}\label{F0}
 	F(x)-\hat{F_0}(x)=(x_3-h)G(x), \quad \forall x \in \Omega_r.
 	\end{equation}
 	
 Let us show now
 	that $G$ has property $P_{N-1}$. Then by induction we will have that
 	$G$ is a restriction to $\Sigma$ of a polynomial $\hat{G}$ of degree $(N-1)$ and thus by (\ref{F0}), $F$
 	is a polynomial of degree at most $N$. Thus we need to show that the
 	function $g:=G|_C$ is a trigonometric polynomial of degree at most
 	$N-1$, for any circle $C$ of radius $r$ lying in $\Omega_r$.
 We can apply suitable rotation along the $x_3$-axes so that the center of $C$ has $x_1=0$. Let us denote by $\alpha$ the geodesic distance between the centers of $C_0$ and $C$ in $\Omega$.
 	
 	We shall split the proof in two cases.
 	
 	Case 1. If $\Sigma$ is the sphere, we can write $C$ as follows
 	$$
 	C=R_{\alpha} C_0, \quad R_{\alpha}=
 	\begin{pmatrix}
 	1&0&0 \\ 0&\cos\alpha&-\sin\alpha\\0&\sin\alpha&\cos\alpha
 	\end{pmatrix}.
 	$$
 	Therefore parametrizing $C_0$ we compute the parametrization of $C$ as follows
 	$$
 	C_0=(\sin r\sin t,\sin r\cos t,\cos r)\Rightarrow
 	C=(*,  *,\  \sin r\cos t\sin \alpha +\cos\alpha\cos r).
 	$$
 	Then we compute $(x_3-h)|_{C}$:
 	\begin{equation}\label{x3}
 	(x_3-h)|_{C}=\sin r\cos t\sin \alpha +\cos\alpha\cos r-\cos r=\end{equation}
 	$$=\sin r\sin\alpha\left(\frac{e^{it}+e^{-it}}{2}\right)-\cos r(1-\cos\alpha).
 	$$

 	
 	Substituting (\ref{x3}) into (\ref{F0}) we get
 	$$
 	(F-\hat F_0)|_C=(\sin r\sin\alpha\left(\frac{e^{it}+e^{-it}}{2}\right)-\cos r(1-\cos\alpha))\cdot G|_C.
 	$$
 	Expanding the left- and the right-hand sides in Fourier series we
 	get
 	$$
 	\sum_{-\infty}^{+\infty}f_ke^{ikt}=(
 	\sin r\sin\alpha\left(\frac{e^{it}+e^{-it}}{2}\right)-\cos r(1-\cos\alpha))\sum_{-\infty}^{+\infty}g_ke^{ikt},
 	$$ where $f_k$ are Fourier coefficients of $(F-\hat F_0)|_C$.
 	Moreover, we have $$f_k=0,\quad  |k|>N,$$ since both $F$ and $\hat F_0$
 	have property $P_N.$ Thus we obtain a linear recurrence relation for
 	the coefficients $g_k$:
 	$$
 	\sin\alpha\sin r\cdot g_{k+1}-4\cos r\sin^2(\alpha/2)\cdot g_k+\sin\alpha\sin r \cdot g_{k-1}=0, \quad  |k|>N.
 	$$
 	The characteristic polynomial of this difference equation,
 	$$\lambda^2-2\cot r\tan(\alpha/2)\lambda+1=0,$$ has the discriminant $$D=\cot^2 r\tan^2(\alpha/2)-1$$ which is strictly negative
 	due to the inequality
 	$\alpha/2<r$, which holds true for the following reason:  the inequality of our setup, $\cot r=\beta<k$ implies that the distance between any two points of $\Omega$ is less than $2r$, in particular $\alpha<2r$.
 	Therefore
 	the characteristic equation has two complex conjugate roots
 	$\lambda_{1,2}=e^{\pm i\phi}$
 	and therefore we can write
 	$$
 	g_{N+l}=c_1e^{il\phi}+c_2e^{-il\phi}, \quad  l\geq2,
 	$$ where
 	$$c_1+c_2=g_N,\quad c_1e^{i\phi}+c_2e^{-i\phi}=g_{N+1}.
 	$$
 	It is obvious now that if at least one of the coefficients $g_N$ or
 	$g_{N+1}$ does not vanish, then at least one of the constants $c_1,
 	c_2$ does not vanish, and therefore the sequence $\{g_{N+l}\}$ does
 	not converge to $0$ when $l\rightarrow +\infty$. This contradicts
 	the continuity of $g$. Therefore, both $g_N$ and $g_{N+1}$ must
 	vanish, and so $g$ is a trigonometric polynomial of degree at most
 	$N-1$, proving that $G$ has property $P_{N-1}$. This completes the
 	proof of Theorem \ref{harmonics} for $C^{\infty}$-case for the sphere.
 	
 	Case 2. If $\Sigma$ is the upper sheet of the hyperboloid, we can write $C$ as follows
 	$$
 	C=Rh_{\alpha} C_0, \quad Rh_{\alpha}=
 	\begin{pmatrix}
 	1&0&0 \\ 0&\cosh\alpha&\sinh\alpha\\0&\sinh\alpha&\cosh\alpha
 	\end{pmatrix}.
 	$$
 	Therefore parametrizing $C_0$ we compute the parametrization of $C$ as follows
 	$$
 	C_0=(\sinh r\sin t,\sinh r\cos t,\cosh r)\Rightarrow
 	C=(*,  *,\  \sinh r\cos t\sinh \alpha +\cosh\alpha\cosh r).
 	$$
 	Then we compute $(x_3-h)|_{C}$:
 	\begin{equation}\label{x3h}
 	(x_3-h)|_{C}=\sinh r\cos t\sinh \alpha +\cosh\alpha\cosh r-\cosh r=\end{equation}
 	$$=\sinh r\sinh\alpha\left(\frac{e^{it}+e^{-it}}{2}\right)-\cosh r(1-\cosh\alpha).
 	$$
 	Substituting (\ref{x3h}) into (\ref{F0}) we get
 	$$
 	(F-\hat F_0)|_C=(\sinh r\sinh\alpha\left(\frac{e^{it}+e^{-it}}{2}\right)-\cosh r(1-\cosh\alpha))\cdot G|_C.
 	$$
 	Expanding again the left- and the right-hand sides in Fourier series we
 	get
 	$$
 	\sum_{-\infty}^{+\infty}f_ke^{ikt}=(
 	\sinh r\sinh\alpha\left(\frac{e^{it}+e^{-it}}{2}\right)-\cosh r(1-\cosh\alpha))\sum_{-\infty}^{+\infty}g_ke^{ikt},
 	$$ where $f_k$ are Fourier coefficients of $(F-\hat F_0)|_C$.
 	Moreover, we have $$f_k=0,\quad  |k|>N,$$ since both $F$ and $\hat F_0$
 	have property $P_N.$ Thus the linear recurrence relation in Case 2 reads:
 	$$
 	\sinh\alpha\sinh r\cdot g_{k+1}-4\cosh r\sinh^2(\alpha/2)\cdot g_k+\sinh\alpha\sinh r \cdot g_{k-1}=0, \quad  |k|>N.
 	$$
 	The characteristic polynomial of this difference equation,
 	$$\lambda^2-2\coth r\cdot\tanh(\alpha/2)\lambda+1=0,$$ has the discriminant $$D=\coth^2r\cdot\tanh^2(\alpha/2)-1$$ which is  again strictly negative
 	due to the inequality
 	$\alpha/2<r$ as in the previous case.
 	Therefore
 	the characteristic equation has two complex conjugate roots
 	and we finish exactly as in the Case 1. This completes the
 	proof of Theorem \ref{harmonics} for $C^{\infty}$-case for the hyperboloid.

 	The general case when $F$ is only continuous, can be proven by a limiting argument
 	exactly as we did in \cite{BM3}. We omit the details.
 \end{proof}

\section{Appendix}
Let $\Omega$ be the interior of the ellipse on the sphere, i.e. the intersection of the sphere  with a quadratic cone
$$
 \partial\Omega=\left\{\frac{x_1^2}{a^2}+\frac{x_2^2}{b^2}=x_3^2\right\},\quad 0<b<a.
$$
The equation of the parallel curves for an ellipse reads $\hat{F}=0$, where
$$\hat{F}=(a^4 (x_2^2+d^2 (x_2^2-1))^2 ((1+a^2)^2 (1+d^2)^2 x_1^4+2 (1+a^2) (1+d^2)
x_1^2 \times
$$
$$
(x_2^2-a^2+d^2(x_2^2-1))+(a^2+x_2^2+d^2(x_2^2-1))^2)+2a^2b^2(a^6 ((1+d^2)^2 x_2^4-d^2-
$$
$$
(1+d^2)^2 x_2^2)-(1+a^2)^2 (1+d^2)^2 x_1^4 (3
d^2 (a^2+d^2)+(1+d^2) (a^2 (3+d^2)-d^2) x_2^2-
$$
$$
(2+a^2) (1+d^2)^2 x_2^4)+
a^4 (d^4+d^2
(2+5 d^2+3 d^4) x_2^2-3 (1+d^2)^2 (1+2 d^2) x_2^4+
$$
$$
3 (1+d^2)^3 x_2^6)+(1+a^2) (1+d^2) x_1^2 (3 a^4 d^2+2 a^2 d^4+3 d^6+(1+d^2)(3 a^2 d^2(d^2-1)-
$$
$$
5 d^4+
a^4
(3+2 d^2)) x_2^2-(1+d^2)^2 (2 a^4-d^2+a^2 (6 d^2-1)) x_2^4+(1+3 a^2) (1+d^2)^3
x_2^6)+
$$
$$
 a^2(d^2-2 (1+d^2)^2 x_2^2+2 (1+d^2)^2 x_2^4) (x_2^2+d^2 (x_2^2-1))^2+d^2(x_2^2+d^2
(x_2^2-1))^3+
$$
$$
 (1+a^2)^3(1+d^2)^3 x_1^6 (x_2^2+d^2(1+x_2^2)))+b^8 ((1+d^2) x_2^2-1)^2 ((1+a^2)^2 (1+d^2)^2 x_1^4+
$$
$$
2 (1+a^2) (1+d^2) x_1^2
(a^2 ((1+d^2) x_2^2-1)-d^2)+(d^2+a^2 ((1+d^2) x_2^2-1))^2)+
$$
$$
b^4 ((1+a^2)^4(1+d^2)^4 x_1^8+a^8 ((1+d^2) x_2^2-1)^2+ 2(1+a^2)^3 (1+d^2)^3
 x_1^6((1+a^2)\times
$$
$$
(1+d^2) x_2^2-2 (a^2+d^2))+
 (1+a^2)^2(1+d^2)^2 x_1^4(6 a^4+10 a^2
d^2+6 d^4+2 (1+d^2)\times
$$
$$
 (a^2 (1+d^2)-3 a^4-3 d^2) x_2^2+ (1+8 a^2+a^4) (1+d^2)^2 x_2^4)+
2 a^6 (d^2+(3+5 d^2+2 d^4) x_2^2-
$$
$$
 3 (1+d^2)^2 (2+d^2)x_2^4+3(1+d^2)^3 x_2^6)+2
 a^2 d^2(d^4+d^2
(2+5 d^2+3 d^4) x_2^2-3 (1+d^2)^2 \times
$$
$$
 (1+2 d^2) x_2^4+3(1+d^2)^3 x_2^6)+2 a^4 ((1+d^2)^2 (3+10 d^2+3 d^4) x_2^4-3 d^4-4 (d+d^3)^2
x_2^2-
$$
$$
6 (1+d^2)^4 x_2^6+3 (1+d^2)^4 x_2^8)+d^4 (x_2^2+d^2 (x_2^2-1))^2-
2 (1+a^2) (1+d^2) x_1^2 (a^6 (2-
$$
$$
 3 (1+d^2) x_2^2+(1+d^2)^2
 x_2^4)+d^2 (2 d^4-3 d^2 (1+d^2) x_2^2+(1+d^2)^2 x_2^4)+a^2 (4 d^4+d^2 (3+
$$
$$
8 d^2+5 d^4) x_2^2-(1+d^2)^2
(2 d^2-3) x_2^4-3 (1+d^2)^3 x_2^6)+a^4(4 d^2+(5+8 d^2+3 d^4) x_2^2+
$$
$$
(1+d^2)^2 (3 d^2-2)
x_2^4-3 (1+d^2)^3 x_2^6)))+
2 b^6(a^6 ((1+d^2)x_1^2-1+(1+d^2) x_2^2)^2\times
$$
$$
((1+d^2)x_2^2-1+(1+d^2) x_1^2 (1+(1+d^2)
x_2^2))+(x_1^2+d^2(x_1^2-1))^2 ((1+d^2)^2 x_2^4-d^2-
$$
$$
(1+d^2)^2 x_2^2+(1+d^2) x_1^2
(1+(1+d^2) x_2^2))+a^2 (3(1+d^2)^3 x_1^6(1+(1+d^2) x_2^2)+
$$
$$
 (1+d^2)^2 x_1^4(4(1+d^2)^2 x_2^4-(1+7 d^2+6 d^4)
 x_2^2-3-6 d^2)+d^2(d^2+(3+5 d^2+2 d^4) x_2^2-
$$
$$
 3(1+d^2)^2(2+d^2) x_2^4+3(1+d^2)^3
 x_2^6)+(1+d^2) x_1^2((3+2 d^2+2 d^4+3 d^6)x_2^2+
$$
$$
d^2(2+3d^2)-(1+d^2)^2(6+d^2) x_2^4+3
(1+d^2)^3 x_2^6))+a^4(3(1+d^2)^3 x_1^6(1+(1+d^2) x_2^2)+
$$
$$
 ((1+d^2) x_2^2-1)^2(d^2-2(1+d^2)^2
x_2^2+2(1+d^2)^2 x_2^4)+(1+d^2)^2 x_1^4((1-2 d^2-3 d^4)x_2^2$$
$$
-3(2+d^2)+5(1+d^2)^2
x_2^4)+(1+d^2) x_1^2(3+2 d^2-(2+5 d^2+3 d^4) x_2^2+
$$
$$
(d^2-5)(1+d^2)^2 x_2^4+4(1+d^2)^3
x_2 ^6)))).
$$
The curve on the sphere defined by the equation $\hat{F}=0$ is singular. For example, it has the following
singular points
$$
 \left(\pm\frac{\sqrt{(a^2-b^2)(b^2-d^2)}}{b\sqrt{(1+a^2)(1+d^2)}},0,\pm\sqrt{\frac{2+8d^2}{5+5d^2}}
 \right).
$$

\section*{\bf Acknowledgements}We are grateful to
Eugene Shustin for indispensable  consultations on algebraic geometry questions.

\end{document}